\newtheorem{theorem}{Theorem}
\newtheorem{corollary}[theorem]{Corollary}
\newtheorem{proposition}[theorem]{Proposition}
\newtheorem{lemma}[theorem]{Lemma}
\newenvironment{proof}[1][Proof]{\noindent\textbf{#1.} }{\ \rule{0.5em}{0.5em}}
\newenvironment{remark}{\noindent\textbf{Remark.}\ }{}
\numberwithin{equation}{section}
\numberwithin{theorem}{section}
\newenvironment{t_enumerate}{
\begin{enumerate}
\setlength{\itemsep}{1pt}
\setlength{\parskip}{0pt}
\setlength{\parsep}{0pt}}{\end{enumerate}
}
\newenvironment{remarks}{
\noindent\textbf{Remarks.}
\begin{enumerate}
\setlength{\itemsep}{1pt}
\setlength{\parskip}{0pt}
\setlength{\parsep}{0pt}}{\end{enumerate}
}
\begin{document}

\date{}
\title{Higher Asymptotics of Laplace's Approximation}
\author{William D. Kirwin\thanks{Center for Mathematical Analysis, Geometry and Dynamical Systems, Instituto Sup\'erior T\'ecnico, Av. Rovisco Pais, 1049-001, Lisbon, \textsc{Portugal}; Email: will.kirwin@gmail.com}}
\maketitle

\begin{abstract}
We present expressions for the coefficients which arise in asymptotic expansions of multiple integrals of Laplace type (the first term of which is known as Laplace's approximation) in terms of asymptotic series of the functions in the integrand. Our most general result assumes no smoothness of the functions of the integrand, but the expressions we obtain contain integrals which may be difficult to evaluate in practice. We then make additional assumptions which are sufficient to simplify these integrals, in some cases obtaining explicit formulae for the coefficients in the asymptotic expansions.
\end{abstract}

\medskip
\noindent\textbf{Keywords}: Laplace's approximation, Laplace integral, asymptotic expansion, series inversion\\
\noindent\textbf{MSC(2000)}: 41A60 (Primary); 41A63, 44A10 (Secondary)

%%%%%%%%%%%%%%%%%%%%%%%%%%%%%%%%%%%%%%%%%%%%%%%%%%%%%%%%%%%%%%%%%%
\section{Introduction.}
%%%%%%%%%%%%%%%%%%%%%%%%%%%%%%%%%%%%%%%%%%%%%%%%%%%%%%%%%%%%%%%%%%

Consider the integral
\[
\int_{a}^{b}e^{-kf(x)}g(x)\,dx
\]
where $f$ and $g$ are sufficiently smooth functions and $f$ attains a nondegenerate, unique minimum in $[a,b]$ at a point $x_{0}\in(a,b)$. In \cite{Laplace-Prob}\footnote{The approximation (\ref{eqn:Laplace1}) appears in \cite{Laplace-Prob}, Part 2, Chapter 1, Section 27.}, Laplace observed that as $k$ increases, the integral localizes near the minimum of $f$, and in particular showed that as $k\rightarrow\infty$,
\begin{equation}
\int_{a}^{b}e^{-kf}g\,dx\sim\sqrt{\frac{2\pi}{k}}\frac{g(x_{0})}{\sqrt{f^{\prime\prime}(x_{0})}}. \label{eqn:Laplace1}
\end{equation}

In this article we will be concerned with the asymptotics of integrals of the form
\[
\int_{R}e^{-kf}g~d^{d}\mathbf{x},
\]
known as Laplace-type integrals, where $k$ is a real parameter which goes to infinity, $f$ and $g$ are real-valued functions defined on some region $R\subset\mathbb{R}^{d}$, and we assume that $f$ attains a unique minimum in the interior of $R$. For convenience, say this minimum value of $f$ is $0$ at $\mathbf{0}\in R$.

Laplace's idea remains valid in $d$ dimensions; namely, for $k$ sufficiently large, the integral becomes localized near the minimum of $f$ and one may compute what is now known as Laplace's approximation:
\begin{equation}
\int_{R}e^{-kf}g~d^{d}\mathbf{x\sim}\left(  \frac{2\pi}{k}\right)^{d/2}\frac{g(\mathbf{0})}{\sqrt{\det Hf(\mathbf{0})}}, \label{eqn:LaplacesApprox}
\end{equation}
where $Hf(\mathbf{0})$ denotes the Hessian of $f$ evaluated at $\mathbf{0}$. To prove this requires two main ideas: first, one shows that the integral localizes to a neighborhood of $\mathbf{0}$. Second, one applies the Morse Lemma to transform $f$, and hence the integral, to a standard form from which one can deduce the desired approximation. We refer the interested reader to \cite{Bleistein-Handelsman}, \cite{deBruijn}, and \cite{Wong} for some modern treatments of Laplace's approximation and related ideas.

When the function $f$ attains a unique minimum somewhere on the boundary, the integral localizes near this boundary point as $k\rightarrow\infty$, but the analysis and resulting asymptotics are slightly different. We do not consider this case in this article.

There are several important and useful generalizations of Laplace's approximation. When $f$ is purely imaginary, because of cancelations arising from oscillations, the integral localizes near critical points of $f$ (not just minima), and the resulting Laplace-type approximations are called stationary phase approximations. If $f$ is defined on a region in $\mathbb{C}$ and is complex valued, the analysis takes on quite a different character, and Laplace-type approximations are known as the method of steepest descent.

All of these Laplace-type approximations are very useful in applications. For example, Laplace and stationary phase approximations arise naturally in statistical mechanics and quantum field theory because the partition functions tend to be of Laplace type. In the quantum case, the natural parameter is $i/\hslash$ and one is interested in the semiclassical limit $\hslash\rightarrow0$, whereas in statistical mechanics (for example in the microcanonical ensemble) one is typically interested in the joint limit as the inverse temperature (which plays the role of $k$) and the dimension $d$ both tend to infinity.

Laplace-type approximations also play a role in pure mathematics; equivariant localization is an application of stationary phase to certain integrals of equivariant cohomology classes where the resulting asymptotic expansion is, somewhat magically, exact. In geometric quantization, Laplace's approximation has been used by the author and Brian C. Hall \cite{Hall-K} to analyze the non-unitarity of ``quantization commutes with reduction.''\footnote{``Quantization commutes with reduction'' is a result of Guillemin and Sternberg \cite{Guillemin-Sternberg} which says that, at the level of vector spaces, the geometric quantization of the symplectic reduction of a K\"{a}hler $G$-manifold is isomorphic to the subspace of $G$-invariant vectors of the quantization of $M$ itself. These spaces, though isomorphic, are not naturally unitary.} This list is by no means exhaustive or unbiased, and we urge the interested reader to consult the above-mentioned references for further details and applications.

\bigskip
In this article, we will stay in the realm of real-valued functions and study a different extension of Laplace's approximation. In modern terms, Laplace's approximation gives the first term in an asymptotic expansion
\[
\int_{R}e^{-kf}g~d^{d}\mathbf{x\sim}\sum_{j=0}^{N}k^{-(j+d)/2}\zeta_{j}.
\]
This means, by definition, that\footnote{A function $h$ is little-$o$ of $k^{p}$, written $h(k)=o(k^{p}),~k\rightarrow\infty,$ if $\lim_{k\rightarrow\infty}k^{-p}h(k)=0.$ A related notion which we will also use is big-$O$; $h(k)=O(k^{p}),~k\rightarrow\infty$ if there exists a constant $C>0$ such that for $k$ sufficiently large, $\left\vert h(k)\right\vert \,<Ck^{p}.$ It follows that if $h(k)=o(k^{p}),~k\rightarrow\infty$, then $h(k)=O(k^{p})$. Conversely, if $h(k)=O(k^{p+\varepsilon})$ for any $\varepsilon>0$, then $h(k)=o(k^{p})$.}
\[
\int_{R}e^{-kf}g\,d^{d}\mathbf{x}=\sum_{j=0}^{N}k^{-(j+d)/2}\zeta_{j}+o(k^{-(N+d)/2}).
\]

The main result of this paper is Theorem \ref{thm:main}, which gives expressions for $\zeta_{j}$ under quite general hypotheses; in particular, we make no smoothness assumptions on $f$ or $g$, assuming only that they admit asymptotic expansions as $\left\vert\mathbf{x}\right\vert \rightarrow0$. (The order of the expansions of $f$ and $g$ determines the order $N$ of the asymptotic expansion of the integral.) The function $g$ is even permitted a mild singularity at $\mathbf{0}$. The proof is essentially a modification of an existence proof of Fulks and Sather \cite{Fulks-Sather} using Theorem \ref{thm:Frame}, which is a variant of a result of Frame \cite{Frame} regarding inversions of series.

The expressions for the coefficient which appear in Theorem \ref{thm:main}, though, involve certain integrals which may in general be difficult to evaluate. With more restrictive assumptions, we can apply Theorem \ref{thm:main} to go much further, obtaining Corollaries \ref{cor:f0const} and \ref{cor:taylor} and Theorem \ref{thm:best}). These more restrictive assumptions are still sufficiently general to include most common applications (though certainly not all). In particular, we make two (partially) independent simplifications.

The first simplification occurs if we assume that $f/\left\vert \mathbf{x}\right\vert ^{\nu}$ is continuous at $\mathbf{x}=\mathbf{0}$, where $\nu>0$ is the order of the zero of $f$ at $\mathbf{0}$ (\textit{c.f.} Corollary \ref{cor:f0const}). In this case, the integrals appearing in Theorem \ref{thm:main} simplify greatly, and given the appropriate data about $f$ and $g$, one expects that the resulting integrals \emph{can} be easily evaluated.

The second simplification occurs if we assume instead that $f$ has a nondegenerate minimum at $\mathbf{0}$. In this case, the Morse Lemma tells us that in a neighborhood of $\mathbf{0}$, the function $f$ can be put into ``standard'' quadratic form. This extra structure allows us to make contact with the usual expressions of Laplace's approximation. Moreover, one can insure (by an appropriate linear change of coordinates) that $f/\left\vert \mathbf{x}\right\vert^{2}$ is continuous at $\mathbf{0}$, and thus our previous simplification applies. The result is the following theorem (we emphasize that the main result of this paper is Theorem \ref{thm:main}; we quote the following theorem because it involves only Taylor coefficients and combinatorial quantities, and so is in some sense our most ``explicit'' result, although it is simply a corollary of Theorem \ref{thm:main}).

\begin{theorem}
\label{thm:best}Let $R\subset\mathbb{R}^{d}$ be a measurable set which contains $\mathbf{0}$ as an interior point and suppose $f\in C^{N+2}(R)$ and $g\in C^{N}(R)$. Suppose moreover that $f$ has a unique, nondegenerate minimum value of $0$ at $\mathbf{0}$. Assume that for some $k_{0}>0$, the integral $\int_{R}e^{-k_{0}f}g\,d^{d}\mathbf{x}$ converges. Then there exists a linear transformation $P:\mathbb{R}^{d}\rightarrow\mathbb{R}^{d}$ and an asymptotic expansion
\begin{equation}
\int_{R}e^{-kf}g\,d^{d}\mathbf{x}=k^{-d/2}\sum_{j=0}^{\lfloor N/2\rfloor} \zeta_{2j}k^{-j}+O(k^{-(N+d)/2+1}),~k\rightarrow\infty
\label{eqn:asymptotic_expansion}
\end{equation}
where $\lfloor N/2\rfloor$ denotes the largest integer less than $N/2$, and the coefficients are given by
\begin{align*}
\zeta_{2j} &  =\frac{(2\pi)^{d/2}}{\sqrt{\det H_{x}f(\mathbf{0})}}\sum_{m=0}^{2j}\sum_{r=1}^{m}\frac{(-1)^{r}}{r!}\\
&  \qquad\times\sum_{\left\vert \beta\right\vert =2j-m}\sum_{\left\{\substack{n_{1}+n_{2}+\cdots+n_{r}=m\\n_{i}\geq1,~i=1,\dots,r}\right\}}
\sum_{\left\vert \alpha_{1}\right\vert =n_{1}+2}\cdots\sum_{\left\vert\alpha_{r}\right\vert =n_{r}+2}\operatorname*{even}(\beta+\alpha_{1}+\cdots+\alpha_{r})\\
&  \qquad\qquad\qquad\qquad\qquad\qquad\qquad\qquad\qquad\qquad\times \frac{(\beta+\alpha_{1}+\cdots+\alpha_{r}-\mathbf{1})!!}{\beta!\alpha_{1}!\cdots\alpha_{r}!} D_{y}^{\beta}g(\mathbf{0})D_{y}^{\alpha_{1}}f(\mathbf{0})\cdots D_{y}^{\alpha_{r}}f(\mathbf{0})
\end{align*}
in which $H_{x}f(\mathbf{0})$ is the Hessian of $f$ at $\mathbf{x}=\mathbf{0}$ (with respect to $\mathbf{x}$), $\mathbf{y}=P\mathbf{x}$ are coordinates in which $H_y f(\mathbf{0})=\mathbf{1}_{\mathbb{R}^n}$, and for a multi-index $\alpha=(\alpha^{1},\alpha^{2},\dots,\alpha^{d})$, $D_{y}^{\alpha}g$ denotes the mixed partial derivative of order $\alpha$ of $g$ with respect to $\mathbf{y},$ and
\[
\operatorname*{even}(\alpha):=\begin{cases}
0 & \text{ if, for any }m\text{, }\alpha^{m}\text{ is odd}\\
1 & \text{ otherwise.}
\end{cases}
\]
Empty sums, $0!$ and $(-1)!!$ are all understood to be $1$.
\end{theorem}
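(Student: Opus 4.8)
The plan is to deduce Theorem \ref{thm:best} from Theorem \ref{thm:main} via Corollary \ref{cor:taylor}, after a linear change of coordinates that exploits the nondegeneracy of the minimum. Since $\mathbf{0}$ is a nondegenerate minimum of $f\in C^{N+2}(R)$, the Hessian $H_{x}f(\mathbf{0})$ is symmetric and positive definite, so there is an invertible $P$ with $H_{x}f(\mathbf{0})=P^{T}P$, whence $\lvert\det P\rvert=\sqrt{\det H_{x}f(\mathbf{0})}$. Putting $\mathbf{y}=P\mathbf{x}$, $\tilde f(\mathbf{y}):=f(P^{-1}\mathbf{y})$, $\tilde g(\mathbf{y}):=g(P^{-1}\mathbf{y})$ and $\tilde R:=P(R)$ (an invertible linear change, so convergence of the integral is unaffected), the integral becomes $\frac{1}{\sqrt{\det H_{x}f(\mathbf{0})}}\int_{\tilde R}e^{-k\tilde f}\tilde g\,d^{d}\mathbf{y}$ with $H_{y}\tilde f(\mathbf{0})=\mathbf{1}_{\mathbb{R}^{d}}$; Taylor's theorem then gives $\tilde f(\mathbf{y})=\tfrac12\lvert\mathbf{y}\rvert^{2}+\phi(\mathbf{y})$ with $\phi\in C^{N+2}$ and $\phi(\mathbf{y})=O(\lvert\mathbf{y}\rvert^{3})$, so $\tilde f/\lvert\mathbf{y}\rvert^{2}\to\tfrac12$ is continuous at $\mathbf{0}$ (the zero of $\tilde f$ has order $\nu=2$) and $\tilde f,\tilde g$ admit Taylor expansions of orders $N+2$ and $N$. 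Hence the hypotheses of Corollaries \ref{cor:f0const} and \ref{cor:taylor} hold for the integral over $\tilde R$, and it remains to specialize the resulting formula.

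Applying Corollary \ref{cor:taylor} (equivalently, unwinding Theorem \ref{thm:main} in this setting), I would substitute $\mathbf{y}=\mathbf{z}/\sqrt{k}$: this produces the prefactor $k^{-d/2}$ and the Gaussian weight $e^{-\lvert\mathbf{z}\rvert^{2}/2}$, so that $\zeta_{2j}$ is the integral against $e^{-\lvert\mathbf{z}\rvert^{2}/2}$ of the coefficient of $k^{-j}$ in the formal expansion of $e^{-k\phi(\mathbf{z}/\sqrt{k})}\,\tilde g(\mathbf{z}/\sqrt{k})$. Expanding $\tilde g(\mathbf{z}/\sqrt{k})=\sum_{\beta}\tfrac{D_{y}^{\beta}\tilde g(\mathbf{0})}{\beta!}k^{-\lvert\beta\rvert/2}\mathbf{z}^{\beta}$, expanding $e^{-k\phi(\mathbf{z}/\sqrt{k})}=\sum_{r\ge0}\tfrac{(-1)^{r}}{r!}\bigl(k\phi(\mathbf{z}/\sqrt{k})\bigr)^{r}$ with $k\phi(\mathbf{z}/\sqrt{k})=\sum_{\lvert\alpha\rvert\ge3}\tfrac{D_{y}^{\alpha}\tilde f(\mathbf{0})}{\alpha!}k^{1-\lvert\alpha\rvert/2}\mathbf{z}^{\alpha}$, and multiplying out the $r$-th power over $r$-tuples $\alpha_{1},\dots,\alpha_{r}$ with each $\lvert\alpha_{i}\rvert\ge3$, I would write $\lvert\alpha_{i}\rvert=n_{i}+2$ (so $n_{i}\ge1$) and $m=n_{1}+\cdots+n_{r}$; such a term carries $k^{r-\sum\lvert\alpha_{i}\rvert/2-\lvert\beta\rvert/2}=k^{-m/2-\lvert\beta\rvert/2}$, so picking out the $k^{-j}$ coefficient forces $\lvert\beta\rvert=2j-m$ with $0\le m\le2j$ and $0\le r\le m$ (the $r=0$ term, with empty products equal to $1$, supplying the pure-$\tilde g$ piece, e.g. $\zeta_{0}=(2\pi)^{d/2}g(\mathbf{0})/\sqrt{\det H_{x}f(\mathbf{0})}$). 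The surviving integral is the Gaussian moment $\int_{\mathbb{R}^{d}}\mathbf{z}^{\gamma}e^{-\lvert\mathbf{z}\rvert^{2}/2}\,d^{d}\mathbf{z}$ with $\gamma=\beta+\alpha_{1}+\cdots+\alpha_{r}$, and the one-dimensional identity $\int_{\mathbb{R}}t^{n}e^{-t^{2}/2}\,dt=\sqrt{2\pi}\,(n-1)!!$ for $n$ even ($0$ for $n$ odd) evaluates it to $(2\pi)^{d/2}\operatorname*{even}(\gamma)\,(\gamma-\mathbf{1})!!$. Reassembling, together with the Jacobian $1/\sqrt{\det H_{x}f(\mathbf{0})}$ and the observation that the $\mathbf{y}$-derivatives of $\tilde f,\tilde g$ at $\mathbf{0}$ are exactly the quantities written $D_{y}^{\alpha}f(\mathbf{0}),D_{y}^{\beta}g(\mathbf{0})$ in the statement, yields the displayed formula for $\zeta_{2j}$.

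Finally I would note that only even powers of $k^{-1/2}$ survive: a hypothetical odd-indexed $\zeta_{2j+1}$ would carry a multi-index $\gamma$ of total degree $\lvert\beta\rvert+m+2r=(2j+1)+2r$, which is odd, hence has an odd component, so $\operatorname*{even}(\gamma)=0$ --- this is what lets \eqref{eqn:asymptotic_expansion} be written as a sum over $j$ alone --- and that the truncation at $j=\lfloor N/2\rfloor$ with remainder $O(k^{-(N+d)/2+1})$ is exactly the error estimate Theorem \ref{thm:main} delivers for $f\in C^{N+2}$, $g\in C^{N}$, which I would simply quote. I expect the main obstacle to be the combinatorial bookkeeping of the middle step --- getting the multinomial expansion of $e^{-k\phi(\mathbf{z}/\sqrt{k})}$, with its constraints $\lvert\alpha_{i}\rvert=n_{i}+2$, $\sum n_{i}=m$, $\lvert\beta\rvert=2j-m$, to align with the Gaussian moments so that the factors $(2\pi)^{d/2}$, $(\gamma-\mathbf{1})!!$, $1/(\beta!\alpha_{1}!\cdots\alpha_{r}!)$ and $(-1)^{r}/r!$ appear with the right multiplicities. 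The genuinely analytic content --- that the formal expansion is asymptotic, that summation and integration may be interchanged, and that the remainder has the claimed order --- is already carried by Theorem \ref{thm:main} (via the series-inversion Theorem \ref{thm:Frame}), so here it is only invoked.
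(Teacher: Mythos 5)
Your setup is exactly the paper's: diagonalize the Hessian, take $P$ with $H_{x}f(\mathbf{0})=P^{T}P$ (the paper writes $P=\sqrt{D}Q$, which is the same thing), pick up the Jacobian $1/\sqrt{\det H_{x}f(\mathbf{0})}$, observe that in the $\mathbf{y}$-coordinates $\nu=2$ and $f_{0}(\Omega)=\tfrac12$ so that Corollaries \ref{cor:f0const} and \ref{cor:taylor} apply, and kill the odd coefficients by the parity of $\lvert\beta+\alpha_{1}+\cdots+\alpha_{r}\rvert=j+2r$. Where you diverge is the last step. The paper substitutes $\nu=2$, $f_{0}=\tfrac12$, $\lambda=d$ into the coefficient formula of Corollary \ref{cor:taylor} and then reduces it to the stated form by evaluating $\Gamma(\tfrac d2+j)$ in double factorials and verifying the identity $2^{r}\binom{-(\frac d2+j)}{r}\frac{(d+2j-2)!!}{(d+2j+2r-2)!!}=\frac{(-1)^{r}}{r!}$. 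You instead recompute the coefficients from scratch by the rescaling $\mathbf{y}=\mathbf{z}/\sqrt{k}$ and term-by-term integration of the formal expansion of $e^{-k\phi(\mathbf{z}/\sqrt k)}\tilde g(\mathbf{z}/\sqrt k)$ against the Gaussian, which produces $(-1)^{r}/r!$ and $(\gamma-\mathbf{1})!!\operatorname*{even}(\gamma)$ directly and, to its credit, makes the origin of these factors transparent. The end formula is correct.

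The one real gap is the sentence asserting that $\zeta_{2j}$ \emph{is} the Gaussian integral of the $k^{-j}$-coefficient of that formal expansion, offered as ``equivalently, unwinding Theorem \ref{thm:main}.'' That equivalence is precisely what is not free: Corollary \ref{cor:taylor} hands you the coefficients as $\frac{1}{2}\Gamma(\frac{d+j}{2})2^{(j+d)/2}\sum_{r}\binom{-\frac{d+j}{2}}{r}2^{r}(\cdots)w_{\gamma}$, and identifying this with your Gaussian moments is exactly the $\Gamma$/double-factorial identity the paper proves (the radial part of your Gaussian integral supplies the $\Gamma$-factor, the angular part supplies $w_{\gamma}$, and the binomial coefficient must collapse to $(-1)^{r}/r!$). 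Alternatively you could justify the term-by-term Gaussian integration independently and invoke uniqueness of asymptotic expansions, but that is a separate Laplace-method estimate, not something delivered by Theorem \ref{thm:main}. Either repair is routine, but as written the step you flag as ``combinatorial bookkeeping'' is the step carrying the actual content of the reduction, and it needs to be done, not merely expected to work out.
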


Recall that a critical point of $f$ is said to be nondegenerate if the Hessian at that point is invertible. The coordinate change $\mathbf{y}=P\mathbf{x}$ appearing in Theorem \ref{thm:best} can be computed explicitly by diagonalizing the Hessian of $f$. More precisely, since it is symmetric, we can write $Hf(\mathbf{0})=Q^{-1}DQ$ for some orthogonal matrix $Q$ and some diagonal matrix $D$. Since the minimum of $f$ at $\mathbf{0}$ is nondegenerate, the eigenvalues of $Hf(\mathbf{0})$ are all positive. The matrix $P$ of Theorem \ref{thm:best} is then $P=\sqrt{D}Q$.

We give expanded expressions of the first few coefficients in the appendix.

\bigskip

Our results are not the first of their kind. It has been known for some time that with sufficient hypotheses on $f$ and $g$, such an expansion exists, and various methods for computing the coefficients have been given. In \cite{Bleistein-Handelsman}, Bleistein and Handelsman assume that the minimum of $f$ is nondegenerate to obtain a complete asymptotic expansion of the form (\ref{eqn:asymptotic_expansion}). The coefficients, though, are computed in terms of a Jacobian of a coordinate transformation (which essentially arises from the Morse Lemma), that in general cannot be computed explicitly. Nevertheless, since the result is eventually evaluated at $x=0$, it is possible to proceed term by term, obtaining explicit formulas in terms of the derivatives of $f$ and $g$.

In a similar direction, in \cite{Skinner}, assuming $f(x)$ has a nondegenerate minimum, Skinner makes a coordinate change (which diagonalizes the Hessian of $f(x)$ at the minimum) and computes $\zeta_2$. As he points out, his methods could likely be used to directly compute the higher coefficients in this case. The end result would be essentially Theorem \ref{thm:best} quoted above.

In dimension one and assuming $f=\sum_{j=2}^{\infty}a_{j}x^{j},~a_{2}\neq0$ and $g=\sum_{j=0}^{\infty}b_{j}x^{j}$, de Bruijn \cite[(4.4.9)]{deBruijn} gives a complete asymptotic expansion of $\int_{-\infty}^{\infty}e^{-kf}g\,dx$ in terms of the coefficients of the series expansion of the product $g\exp\{-k\sum_{j=3}^{\infty}a_{j}x^{j}\}$. This series expansion of the product can in principle be computed in terms of the $a_{j}$ and $b_{j}$ using Fa\`{a} di Bruno's formula, though de Bruijn does not do this.

In the $1$-dimensional case, Wojdylo gives a closed form expression for $\zeta_{j}$ for all $j$ \cite{WojdyloSIAM, WojdyloJCAM}. His expressions should correspond to de Bruijn's expansions in terms of Taylor series with the above  mentioned simplifications using Fa\`{a} di Bruno's formula. Our results are in fact a generalization of those of Wojdylo in the precise sense that when $d=1$, our formulas reduce to his (see Section \ref{sec:1D}).

More recently, Denef and Sargos \cite{Denef-Sargos}, Kaminksi and Paris \cite{Kaminski-ParisI, Kaminski-ParisII}, and Liakhovetski and Paris \cite{Liakhovetski-Paris} use the relationship between polynomials and Newton polygons to analyze the asymptotics when the exponent $f$ is a polynomial, possibly with noninteger powers. In a related direction, Dostal and Gaveau \cite{Dostal-Gaveau} also work with the Newton polygon to study stationary phase for a polynomial phase function. Our methods are not generally well adapted to the case of polynomials with noninteger powers since one must essentially make a Taylor expansion of the polynomial, at which point it is difficult to see the geometry of the Newton polygon.

\bigskip
\noindent The rest of the paper is organized as follows. In Section \ref{sec:general} we prove our most general result, Theorem \ref{thm:main}. For the proof, we recall and prove a variant of a result of Frame \cite{Frame} on inversion of series, and summarize in Lemma \ref{lemma:Fulks-Sather} the results of Fulks and Sather \cite{Fulks-Sather} which we need. Section \ref{sec:general} concludes with a few brief remarks regarding some of the combinatorial quantities which arise in Theorem \ref{thm:main}.

In Section \ref{sec:f0consttaylor}, we describe the simplifications that occur in Theorem \ref{thm:main} when we assume that $f/\left\vert \mathbf{x}\right\vert ^{\nu}$ is continuous at $\mathbf{0}$ for some (maximal) $\nu>0$. Then, in addition to the continuity assumption at $\mathbf{0}$, we place additional smoothness hypotheses on $f$ and $g$, and give a corollary to Theorem \ref{thm:main} which expresses the coefficients $\zeta_{j}$ in terms of the Taylor coefficients of $f$ and $g$.

In Section \ref{sec:f0constv=2}, we suppose that the minimum of $f$ is nondegenerate. First, we exploit the nondegeneracy of the minimum to show that by a linear change of coordinates, we can insure that $f/\left\vert\mathbf{y}\right\vert^{2}$ is continuous at $\mathbf{0}$. We then use the results of Section \ref{sec:f0consttaylor} to obtain Theorem \ref{thm:best}. Finally, we relate Theorem \ref{thm:main} to Laplace's approximation (\ref{eqn:LaplacesApprox}).

In Section \ref{sec:1D}, we show that in dimension one our results reduce to those of Wojdylo \cite{WojdyloSIAM, WojdyloJCAM}.

The Appendix contains the first few coefficients of the asymptotic expansion under the most general hypotheses, then again with the assumption that the minimum is nondegenerate.

%%%%%%%%%%%%%%%%%%%%%%%%%%%%%%%%%%%%%%%%%%%%%%%%%%%%%%%%%%%%%%%%%%
\section{The general case.\label{sec:general}}
%%%%%%%%%%%%%%%%%%%%%%%%%%%%%%%%%%%%%%%%%%%%%%%%%%%%%%%%%%%%%%%%%%

In this section, we state and prove our most general result: expressions for the coefficients which appear in the asymptotic expansion of integrals of Laplace-type with no smoothness or nondegeneracy assumptions on the functions appearing in the integrand. As mentioned in the introduction, the price we pay for this generality is that the coefficients are expressed in terms of certain integrals which may be difficult to evaluate in practice. In Section \ref{sec:corollaries} below, we discuss various additional hypotheses which are sufficient to alleviate this problem. Our main result, Theorem \ref{thm:main} below, is essentially a modification of an existence theorem of Fulks and Sather \cite{Fulks-Sather} using a variant, Theorem \ref{thm:Frame}, of a result on inversion of series due to Frame \cite{Frame}.

\bigskip

Let $\mathbf{x}=(x^{1},\dots,x^{d})$ be coordinates on $\mathbb{R}^{d}$. Denote by $S^{d-1}=\{\left\vert \mathbf{x}\right\vert =1\}\subset\mathbb{R}^{d}$ the unit sphere and introduce spherical coordinates $\rho:=\sqrt{(x^{1})^{2}+\cdots+(x^{d})^{2}}$ and $\Omega=\mathbf{x}/\rho\in S^{d-1}.$ Our main result is the following theorem.

\begin{theorem}
\label{thm:main}Suppose $f$ and $g$ are measurable functions on a measurable set $R\in\mathbb{R}^{d}$ which contains $\mathbf{0}$ as an interior point. Suppose further that $f$ attains its unique minimum value of $0$ at $\mathbf{0}\in R$ and is otherwise bounded away from zero, and that there is a positive integer $N$ and
\begin{enumerate}
\item $N+1$ continuous functions $f_{j}(\Omega),~j=0,\dots,N$ with $f_{0}(\Omega)>0$ such that for some real number $\nu>0$
\begin{equation}
f(\rho,\Omega)=\rho^{\nu}\sum_{j=0}^{N}f_{j}(\Omega)\rho^{j}+o(\rho^{N+\nu})\text{ as }\rho\rightarrow0,\text{ and} \label{eqn:f-series}
\end{equation}

\item $N+1$ functions $g_{j}(\Omega),~j=0,\dots,N$ such that for some real number $\lambda>0$
\begin{equation}
g(\rho,\Omega)=\rho^{\lambda-d}\sum_{j=0}^{N}g_{j}(\Omega)\rho^{j}+o(\rho^{N+\lambda-d})\text{ as }\rho\rightarrow0. \label{eqn:g-series}
\end{equation}
Then if there exists $k_{0}>0$ such that
\[
\int_{R}e^{-k_{0}f}g\,d^{d}\mathbf{x}
\]
converges, then there exists an asymptotic expansion
\begin{equation}
\int_{R}e^{-kf}g\,d^{d}\mathbf{x}=\sum_{j=0}^{N}\zeta_{j}k^{-(j+\lambda)/\nu}+o(k^{-(N+\lambda)/\nu}),~k\rightarrow\infty\label{eqn:asympexp}
\end{equation}
where the coefficients are given by
\begin{equation}
\zeta_{j}=\tfrac{1}{\nu}\Gamma\left(  \tfrac{j+\lambda}{\nu}\right)\int_{S^{d-1}}\left[  f_{0}(\Omega)^{-(j+\lambda)/\nu}\sum_{m=0}^{j} g_{j-m}(\Omega)\sum_{r=1}^{m}\binom{-\frac{j+\lambda}{\nu}}{r}\frac{f_{m}^{(r)}(\Omega)}{f_{0}(\Omega)^{r}}\right]  d\Omega, \label{eqn:gencoeffs}
\end{equation}
where
\[
f_{m}^{(r)}(\Omega)=\sum_{\substack{n_{1}+n_{2}+\cdots+n_{r} =m\\n_{i}\geq1,~i=1,\dots,r}}f_{n_{1}}(\Omega)f_{n_{2}}(\Omega)\cdots f_{n_{r}}(\Omega)
\]
is the sum of all ordered products\footnote{For example, $f_{6}^{(3)}=6f_{1}f_{2}f_{3}+3f_{1}^{2}f_{4}+f_{2}^{3}$.} of $r$ elements of $\{f_{1}(\Omega),f_{2}(\Omega),\dots,f_{N}(\Omega)\}$ such that the subscripts add to $m,$ and $\binom{\alpha}{r}:=\alpha(\alpha-1)\cdots(\alpha-r+1)/r!$. Empty sums are understood to be $1$.
\end{enumerate}
\end{theorem}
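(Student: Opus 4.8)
The plan is to reduce the integral to a one–dimensional radial integral in the variable $\rho$ for each fixed direction $\Omega$, and then to apply the series–inversion machinery (Theorem~\ref{thm:Frame}) together with the Fulks--Sather lemma (Lemma~\ref{lemma:Fulks-Sather}) to extract the asymptotics term by term. First I would pass to spherical coordinates, writing $d^d\mathbf{x}=\rho^{d-1}\,d\rho\,d\Omega$, so that
\[
\int_R e^{-kf}g\,d^d\mathbf{x}=\int_{S^{d-1}}\!\!\int_0^{\rho_0(\Omega)} e^{-kf(\rho,\Omega)}\,g(\rho,\Omega)\,\rho^{d-1}\,d\rho\,d\Omega + (\text{negligible tail}),
\]
where the localization to a neighborhood $\rho<\rho_0(\Omega)$ of $\mathbf{0}$ is justified by the hypothesis that $f$ is bounded away from $0$ outside any neighborhood of $\mathbf{0}$ together with the convergence of $\int_R e^{-k_0 f}g$; one splits $e^{-kf}=e^{-(k-k_0)f}e^{-k_0f}$ and bounds the outer region by $e^{-(k-k_0)\delta}\int_R e^{-k_0f}|g|$, which is $o(k^{-p})$ for every $p$.

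Next, for fixed $\Omega$ I would treat the inner integral as a one–dimensional Laplace integral in $\rho$ with phase $\phi(\rho):=f(\rho,\Omega)=\rho^\nu\sum_{j=0}^N f_j(\Omega)\rho^j+o(\rho^{N+\nu})$ and amplitude $\psi(\rho):=g(\rho,\Omega)\rho^{d-1}=\rho^{\lambda-1}\sum_{j=0}^N g_j(\Omega)\rho^j+o(\rho^{N+\lambda-1})$. The substitution $t=f_0(\Omega)\rho^\nu$ (equivalently $\rho=(t/f_0)^{1/\nu}$) turns the leading behavior of the phase into $t$, and one writes $kf(\rho,\Omega)=kt\bigl(1+\sum_{m\ge1}(f_m/f_0)\rho^m\bigr)$. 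The core analytic input here is the Fulks--Sather result (Lemma~\ref{lemma:Fulks-Sather}), which guarantees the existence of an asymptotic expansion of the inner integral in powers of $k^{-1/\nu}$ with coefficients given by certain integrals; what remains is to identify those coefficients explicitly. This is where Frame's series-inversion theorem enters: the change of variables from $\rho$ to $s:=f(\rho,\Omega)^{1/\nu}$ (a genuine change of variable near $\rho=0$ since $f_0>0$) inverts the series $s=\rho\bigl(\sum_{j}f_j\rho^j\bigr)^{1/\nu}$, and Theorem~\ref{thm:Frame} yields $\rho$ and hence $\rho^{\lambda-1}\,d\rho/ds$ (the transformed amplitude times Jacobian) as an explicit series in $s$ whose coefficients involve the $f_m^{(r)}$ and the generalized binomial coefficients $\binom{-(j+\lambda)/\nu}{r}$. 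Carrying out this inversion, the inner integral becomes $\sum_j c_j(\Omega)\int_0^\infty e^{-ks^\nu} s^{\,j+\lambda-1}\,ds + o(\cdots)=\sum_j c_j(\Omega)\,\tfrac1\nu\Gamma\bigl(\tfrac{j+\lambda}{\nu}\bigr)k^{-(j+\lambda)/\nu}+\cdots$, and I would match $c_j(\Omega)$ against the claimed bracketed expression in~(\ref{eqn:gencoeffs}).

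Finally I would integrate over $S^{d-1}$, using continuity of the $f_j$ (and measurability/integrability of the $g_j$, which follows from the convergence hypothesis) to justify interchanging the $\Omega$–integration with the asymptotic expansion in $k$ — here one needs the error terms $o(\rho^{N+\nu})$, $o(\rho^{N+\lambda-d})$ to be uniform enough in $\Omega$ after the localization, which is where some care is required, and this uniformity (or a dominated-convergence argument replacing it) is what legitimizes term-by-term integration. The main obstacle I anticipate is precisely this bookkeeping step: showing that the one–dimensional expansion's remainder is controlled uniformly in $\Omega$ (or at least in an $L^1(S^{d-1})$ sense) so that the radial remainders assemble into the single error term $o(k^{-(N+\lambda)/\nu})$, and simultaneously verifying that the algebra of the Frame inversion reproduces exactly the combinatorial sum $\sum_{r=1}^m\binom{-(j+\lambda)/\nu}{r}f_m^{(r)}/f_0^r$ rather than some reindexed variant. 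The localization, the change of variables, and the Gamma-function evaluation are routine; the delicate points are the uniform-remainder estimate and the precise identification of the inversion coefficients.
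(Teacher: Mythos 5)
Your overall architecture (localize, invert the radial series via Frame's theorem, apply Watson's lemma, integrate over $S^{d-1}$) matches the paper's, but there is a genuine gap at the central step. You propose to substitute $s:=f(\rho,\Omega)^{1/\nu}$ directly, calling it ``a genuine change of variable near $\rho=0$ since $f_0>0$.'' Under the hypotheses of the theorem this is false: $f$ is only \emph{measurable} and only admits an asymptotic expansion $f=\rho^{\nu}\sum f_j(\Omega)\rho^{j}+o(\rho^{N+\nu})$ as $\rho\to0$. Such an $f$ need not be continuous, let alone monotone or invertible in $\rho$ on any interval $(0,\rho_0)$ — the little-$o$ control at the origin does not prevent wild oscillation at every positive scale. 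Consequently neither the substitution nor the subsequent application of Theorem~\ref{thm:Frame} (which requires a convergent power series, not an asymptotic one) can be performed on $f$ itself. You also misstate what Lemma~\ref{lemma:Fulks-Sather} provides: it does not give an asymptotic expansion of the inner radial integral in powers of $k^{-1/\nu}$; it gives the identity $\int_R e^{-kf}g\,d^d\mathbf{x}=k\int_0^a e^{-kt}\widetilde{G}(t)\,dt+o(k^{-(N+\lambda)/\nu})$, where $\widetilde{G}(t)=\int_{\{\widetilde{f}<t\}}g\,d^d\mathbf{x}$ and $\widetilde{f}=\rho^{\nu}\sum_{j=0}^N f_j(\Omega)\rho^{j}$ is the \emph{truncated polynomial}. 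That replacement of $f$ by $\widetilde{f}$ (proved by squeezing between $f_{\pm}=\widetilde{f}\pm\varepsilon\rho^{N+\nu}$) is precisely the device that removes the obstruction above.

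With that correction the rest of your plan is essentially the paper's proof: one inverts $t=\widetilde{f}(\rho,\Omega)$ for fixed $\Omega$ — legitimate because $\widetilde{f}$ is a polynomial in $\rho$ with $f_0(\Omega)>0$, hence analytic and increasing for small $\rho$ — and Theorem~\ref{thm:Frame} applied to $u=t^{1/\nu}$ gives $\rho^{q}=t^{q/\nu}\sum_j b_j^{(q)}(\Omega)t^{j/\nu}+o(t^{(N+q)/\nu})$ with $q=j+\lambda$, producing exactly the combination $\sum_r\binom{-(j+\lambda)/\nu}{r}f_m^{(r)}/f_0^{r}$ after substituting into $\widetilde{G}(t)=\int_{S^{d-1}}\sum_j\frac{g_j(\Omega)}{j+\lambda}\rho^{j+\lambda}(t,\Omega)\,d\Omega$. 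The uniformity in $\Omega$ that you flag as the main obstacle is then routine: the $f_j$ are continuous on the compact sphere, so the tail of the convergent inverted series is uniformly $O(u^{N+1+q})$, and the $f$-versus-$\widetilde{f}$ and $g$-versus-truncation errors are absorbed in the Fulks--Sather lemma rather than needing a per-direction remainder estimate.
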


The first few coefficients of Theorem \ref{thm:main} are listed in the appendix.

\bigskip

\begin{remarks}
\item If we make the slightly stronger hypothesis that $g$ admits an asymptotic expansion of the form
\begin{equation}
\label{eqn:ghyp}
g(\rho,\Omega)=\rho^{\lambda-d}\sum_{j=0}^N g_j(\Omega)\rho^j + O(\rho^{N+\lambda-d+\beta})
\end{equation}
for some $\beta>0$, it is straightforward to make the appropriate modifications to the proof to show that there exists an asymptotic expansion
\begin{equation}
\int_{R}e^{-kf}g\,d^{d}\mathbf{x}=\sum_{j=0}^{N}\zeta_{j}k^{-(j+\lambda)/\nu} +O(k^{-(N+\lambda+\min\{1,\beta\})/\nu}),~k\rightarrow\infty\label{eqn:asympexpB}
\end{equation}

\item If we assume that $f/\rho^{\nu}$ and $g/\rho^{\lambda-d}$ are $C^{N}$ at the origin, then by continuity we have that
\begin{align*}
\left.\partial_{\rho}\left(  \frac{f}{\rho^{\nu}}\right)(\rho,\Omega)\right\vert _{\rho=0}  &  =\lim_{\rho\rightarrow0}\frac{\left(\frac{f}{\rho^{\nu}}\right)  _{0}-\left(  \frac{f}{\rho^{\nu}}\right)_{-\rho\Omega}}{\rho}\\
&  =-\lim_{\rho\rightarrow0}\frac{\left(  \frac{f}{\rho^{\nu}}\right)_{\rho(-\Omega)}-\left(  \frac{f}{\rho^{\nu}}\right)  _{0}}{\rho} =-\left.\partial_{\rho}\left(  \frac{f}{\rho^{\nu}}\right)  (\rho,-\Omega)\right\vert_{\rho=0}
\end{align*}
(similarly for $g$) which implies $f_{j}(-\Omega)=(-1)^{j}f_{j}(\Omega)$ and $g_{j}(-\Omega)=(-1)^{j}g_{j}(\Omega)$. If we let
\[
\eta_{j}(\Omega):=f_{0}(\Omega)^{-(j+\lambda)/\nu}\sum_{m=0}^{j}g_{j-m}(\Omega)\sum_{r=1}^{m} \binom{-\frac{j+\lambda}{\nu}}{r}\frac{f_{m}^{(r)}(\Omega)}{f_{0}(\Omega)^{r}}
\]
denote the integrand in (\ref{eqn:gencoeffs}), then a short computation shows that
\[
\eta_{j}(-\Omega)=(-1)^{j}\eta_{j}(\Omega).
\]
It follows then that $\zeta_{2j+1}=0$ since it is the integral of the antisymmetric function $\eta_{j}$ over the $(d-1)$-sphere.
\end{remarks}

\bigskip

Our proof of Theorem \ref{thm:main} roughly follows that of Fulks and Sather \cite{Fulks-Sather}, although we will use a variant of a formula for the inversion of a series due to Frame \cite{Frame}. There are other, equivalent, formulations of series inversion, see for example Kamber \cite{Kamber}; we choose to use Frame's results because they are particularly simple and well suited to our purposes. Because the statement of Frame's result that we need is slightly different from the original, and Frame's proof actually holds in slightly more generality than it was originally stated, we include the result and the relatively short proof here for completeness.

\begin{theorem}
\label{thm:Frame}\cite[Thm. 1]{Frame} For any real numbers $\nu\neq0$ and $q>0$, let $u=f(\rho)$ and $\rho=f^{-1}(u)$ be inverse functions defined for $\rho$ near $0$ by the convergent power series
\begin{align}
u^{\nu}  &  =\rho^{\nu}\sum_{j=0}^{\infty}a_{j}\rho^{j},\text{ with } a_{0}>0\text{, and}\label{eqn:Frame_series}\\
\rho^{q}  &  =u^{q}\sum_{j=0}^{\infty}b_{j}u^{j}. \label{eqn:Frame_inv_series}
\end{align}
Then the coefficients $b_{j}$ in the inverted power series (\ref{eqn:Frame_inv_series}) are given explicitly in terms of the coefficients $a_{j}$ of (\ref{eqn:Frame_series}) by the inversion formula
\begin{equation}
b_{j}=\frac{q}{j+q}\,a_{0}^{-(j+q)/\nu}\sum_{r=1}^{j}\binom{-\frac{j+q}{\nu}}{r}a_{0}^{-r}a_{j}^{(r)}\text{ for }j>0, \label{eqn:inv_coeffs}
\end{equation}
where empty sums are understood to be $1$, $\binom{\alpha}{r}:=\alpha(\alpha-1)\cdots(\alpha-r+1)/r!,$ and
\[
a_{j}^{(r)}=\sum_{\substack{n_{1}+n_{2}+\cdots+n_{r}=j\\n_{i}\geq1,~i=1,\dots,r}}a_{n_{1}}a_{n_{2}}\cdots a_{n_{r}}
\]
is the sum of all ordered products of $r$ terms of the set $\{a_{1},a_{2},a_{3},\dots\}$ in which the sum of subscripts is $k$.
\end{theorem}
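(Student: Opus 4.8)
The plan is to prove Theorem~\ref{thm:Frame} by direct substitution: insert one of the two series into the other and solve recursively for the unknown coefficients, then recognize the closed form. First I would reduce to a normalized situation by writing $u^\nu = \rho^\nu a_0(1 + \phi(\rho))$ where $\phi(\rho) = \sum_{j\geq 1}(a_j/a_0)\rho^j$, so that $u = \rho a_0^{1/\nu}(1+\phi(\rho))^{1/\nu}$. Raising to the power $q$ and using the binomial series $(1+\phi)^{q/\nu} = \sum_{r\geq 0}\binom{q/\nu}{r}\phi^r$, one gets $u^q$ as an explicit power series in $\rho$ whose coefficients are polynomials in the $a_j/a_0$. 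Inverting the relation $\rho^q = u^q \sum_j b_j u^j$ is then the same kind of computation in the other direction. The cleanest route is to observe that the inversion formula~(\ref{eqn:inv_coeffs}) is \emph{symmetric in form} under swapping the roles of $(\rho,\nu)$ and $(u,q)$: the pair of series~(\ref{eqn:Frame_series})--(\ref{eqn:Frame_inv_series}) says exactly that $\rho^\nu = u^\nu \sum_j \tilde a_j u^j$ holds with the $\tilde a_j$ determined by the $a_j$, and applying the same inversion twice must return the identity. So the heart of the matter is to verify one substitution identity.

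The key computational step I would carry out is: substitute $\rho^q = u^q\sum_{j\geq 0} b_j u^j$ (with $b_0 = a_0^{-q/\nu}$, forced by matching leading terms) into the $q$-th power of~(\ref{eqn:Frame_series}). Concretely, from $u^\nu = \rho^\nu\sum_j a_j\rho^j$ we obtain $\rho^q = u^q \big(\sum_j a_j\rho^j/a_0\big)^{-q/\nu} a_0^{-q/\nu}$ formally, but that still has $\rho$ on the right; instead I would impose the ansatz~(\ref{eqn:Frame_inv_series}) and expand $u^{q}$, using $u = \rho a_0^{1/\nu}(1+\phi)^{1/\nu}$, as a series in $\rho$, then collect the coefficient of $\rho^{q+j}$ on both sides of $\rho^q = u^q\sum_m b_m u^m$. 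This yields, for each $j\geq 1$, a linear equation expressing $b_j$ in terms of $b_0,\dots,b_{j-1}$ and the $a_i$. The claim is that the solution is~(\ref{eqn:inv_coeffs}). Rather than unwinding the recursion, I would verify the closed form directly: plug the candidate $b_j = \frac{q}{j+q}a_0^{-(j+q)/\nu}\sum_{r=1}^j \binom{-(j+q)/\nu}{r}a_0^{-r}a_j^{(r)}$ into the identity $\big(\sum_m b_m u^m\big) = u^{-q}\rho^q = a_0^{-q/\nu}\big(1+\phi(\rho)\big)^{-q/\nu}$ after re-expressing everything in the single variable $\rho$ (using $u/\rho = a_0^{1/\nu}(1+\phi)^{1/\nu}$), and check equality of coefficients of each power of $\rho$. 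The generating-function manipulation that makes this transparent is the Lagrange-type observation that $u^m = \rho^m a_0^{m/\nu}(1+\phi)^{m/\nu}$, so $\sum_m b_m u^m$ becomes $\sum_m b_m \rho^m a_0^{m/\nu}\sum_{s}\binom{m/\nu}{s}\phi^s$; matching against $a_0^{-q/\nu}\sum_r \binom{-q/\nu}{r}\phi^r$ and using the product expansion $\phi^r = \sum_{j\geq r} a_0^{-r} a_j^{(r)}\rho^j$ (which is the definition of $a_j^{(r)}$) gives a combinatorial identity among binomial coefficients that reduces, after collecting, to a Vandermonde-type convolution.

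The step I expect to be the main obstacle is precisely that last combinatorial identity: showing that the coefficients produced by the double expansion telescope into the single sum $\sum_{r=1}^j\binom{-(j+q)/\nu}{r}a_0^{-r}a_j^{(r)}$ with the prefactor $\frac{q}{j+q}a_0^{-(j+q)/\nu}$. This is where the factor $q/(j+q)$ comes from, and it is the analogue of the $1/n$ in Lagrange inversion; I would expect to need an identity of the form $\sum_{s=0}^{r}\binom{m_s/\nu}{\cdots}\cdots$ collapsing via the Chu--Vandermonde identity $\sum_i \binom{x}{i}\binom{y}{n-i} = \binom{x+y}{n}$, applied to the ``$\nu$-binomial'' coefficients $\binom{-(j+q)/\nu}{r}$. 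An alternative, possibly cleaner, route to dodge the brute-force identity is to prove~(\ref{eqn:inv_coeffs}) by Lagrange--Bürmann inversion applied to the analytic function $w(\rho) = (u(\rho)/\rho)^\nu = \sum_{j\geq 0} a_j \rho^j$: since $u^q = \rho^q w^{q/\nu}$ and we want $\rho^q$ as a series in $u$, the Lagrange inversion theorem gives $[u^{q+j}]\rho^q = \frac{q}{q+j}[\rho^{j}]\, w(\rho)^{-(q+j)/\nu}$ after the substitution $\rho \mapsto u\,w^{-1/\nu}$, and expanding $w^{-(q+j)/\nu} = a_0^{-(q+j)/\nu}(1 + \phi)^{-(q+j)/\nu} = a_0^{-(q+j)/\nu}\sum_r \binom{-(q+j)/\nu}{r}\phi^r$ together with $[\rho^j]\phi^r = a_0^{-r}a_j^{(r)}$ yields~(\ref{eqn:inv_coeffs}) immediately. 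I would present the Lagrange-inversion proof as the main argument, noting that it holds for any real $\nu\neq 0$ and $q>0$ since only formal/convergent power-series manipulations of $w$ near $\rho=0$ (where $w(0)=a_0>0$, so all fractional powers of $w$ are well-defined analytic functions) are used, which is the slight extra generality over Frame's original statement.
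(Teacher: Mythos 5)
Your final (Lagrange--B\"urmann) argument is essentially the paper's own proof: the paper derives exactly the identity $b_j=\frac{q}{q+j}[\rho^j]\,w(\rho)^{-(q+j)/\nu}$ by writing $b_j$ as a Cauchy contour integral in $u$, changing variables to $\rho$, and integrating by parts (which is where the factor $q/(q+j)$ and the hypothesis $q>0$ enter), and then finishes with the same binomial expansion of $(a_0+\alpha(\rho))^{-(q+j)/\nu}$ and the identification $[\rho^j]\alpha^r=a_j^{(r)}$. The one point to be careful about is that the textbook Lagrange inversion theorem is usually stated for integer exponents, so for arbitrary real $q>0$ and $\nu\neq 0$ you should not cite it as a black box but supply the short contour-integral justification — the single-valuedness of $\rho^q u^{-q-j}=\rho^{-j}(\cdot)$ on the circle and the integration by parts — which is precisely the content of the paper's proof.
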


\begin{proof}
First, substituting (\ref{eqn:Frame_series}) into (\ref{eqn:Frame_inv_series}) and matching the coefficients of $\rho^{q}$ shows that $b_{0}=a_{0}^{-q/\nu}$. In particular, since $a_{0}>0$ we also have $b_{0}>0$. Choose a circle $C$ in the complex $\rho$-plane centered at $0$ so that the image $C^{\prime}$ under the map $\rho\mapsto u(\rho)$ is a simple curve which winds around $0$ once in the complex $u$-plane. Choose $C$ to be small enough that $\left\vert\sum_{j=1}^{\infty}a_{j}\rho^{j}\right\vert <a_{0}$ for $\rho\in C$, $\left\vert \sum_{j=1}^{\infty}b_{j}u^{j}\right\vert \,<b_{0}$ for $u\in C^{\prime},$ and there are no other zeroes of $\sum_{j=1}^{\infty}a_{j}\rho^{j}$ inside $C$ or zeroes of $\sum_{j=1}^{\infty}b_{j}u^{j}$ inside $C^{\prime}.$

We can now use Cauchy's integral formula\footnote{Recall that $\int_{C}z^{k-1}dz=2\pi i$ if $k=0$ and $=0$ if $k$ is any nonzero integer.} to extract the desired coefficients $b_{j}$:
\begin{align*}
b_{j}  &  =\frac{1}{2\pi i}\int_{C^{\prime}}\left(  \frac{\rho}{u}\right)^{q}u^{-(j+1)}\,du\\
&  =\frac{1}{2\pi i}\int_{C}\frac{\rho^{q}}{u^{q+j+1}}\frac{du}{d\rho}d\rho\\
&  =\frac{1}{2\pi i}\int_{C}\rho^{q}\frac{d}{d\rho}\left(  -\frac{u^{-q-j}}{q+j}\right)  d\rho\\
&  =\frac{1}{2\pi i}\int_{C}\frac{u^{-q-j}}{q+j}q\rho^{q-1}d\rho
\end{align*}
where we made a change of variables in the first line, the second line is valid because we assume $q>0$, and the last line is integration by parts.

Next, $u=\rho(a_{0}+\alpha(\rho))^{1/\nu}$, where $\alpha(\rho):=\sum_{j=1}^{\infty}a_{j}\rho^{j}$, so we have
\begin{align*}
b_{j}  &  =\frac{1}{2\pi i}\int_{C}\frac{q}{q+j}\rho^{-q-j}(a_{0}+\alpha(\rho))^{-(q+j)/\nu}\rho^{q-1}d\rho\\
&  =\frac{1}{2\pi i}\int_{C}\frac{q}{q+j}\rho^{-j-1}\sum_{r=0}^{\infty}\binom{-\frac{q+j}{\nu}}{r}a_{0}^{-(q+j)/\nu-r}\alpha(\rho)^{r}d\rho\\
&  =\frac{q}{q+j}\sum_{r=0}^{\infty}\binom{-\frac{q+j}{\nu}}{r}a_{0}^{-(q+j)/\nu-r}\left\{  \frac{1}{2\pi i}\int_{C}\alpha(\rho)^{r}\rho^{-j-1}d\rho\right\}  .
\end{align*}
By Cauchy's integral formula, $\frac{1}{2\pi i}\int_{C}\alpha^{r}\rho^{-j-1}d\rho=a_{j}^{(r)}$ is the sum of all ordered products of $r$ elements of the set $\{a_{1},a_{2},\dots\}$ whose subscripts add to $j$. This sum is only nonzero for $1\leq r\leq j$, so we obtain
\[
b_{j}=\frac{q}{j+q}\,a_{0}^{-(j+q)/\nu}\sum_{r=1}^{j}\binom{-\frac{j+q}{\nu}}{r}a_{0}^{-r}a_{j}^{(r)}.
\]
\end{proof}

\bigskip

The results of Fulks and Sather from which Theorem \ref{thm:main} is built are summarized in the following lemma\footnote{Fulks and Sather use this to show that an asymptotic expansion of the form (\ref{eqn:asympexp}) exists, and they compute the first term $\zeta_{0}$.} (for completeness, we include a proof which outlines their argument). Define
\[
\widetilde{f}:=\rho^{\nu}\sum_{j=0}^{N}f_{j}(\Omega)\rho^{j}
\]
and
\[
\widetilde{R}_{t}:=\{\mathbf{x}:\widetilde{f}(x)<t\}.
\]

\begin{lemma}
\label{lemma:Fulks-Sather}\cite{Fulks-Sather} Under the same hypotheses as Theorem \ref{thm:main}, for every sufficiently small $a>0$
\[
\int_{R}e^{-kf}g~d^{d}\mathbf{x}=k\int_{0}^{a}e^{-kt}\widetilde{G}(t)dt+o(k^{-(N+\lambda)/\nu}),~k\rightarrow\infty
\]
where
\[
\widetilde{G}(t):=\int_{\widetilde{R}_{t}}g\,d^{d}\mathbf{x}.
\]
\end{lemma}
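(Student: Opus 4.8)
The plan is to show that replacing $f$ by its truncated series $\widetilde f$ and $g$ by the corresponding truncation, and then rewriting the integral as a one-dimensional Laplace integral over the level sets of $\widetilde f$, changes the integral only by an error of order $o(k^{-(N+\lambda)/\nu})$. I would proceed in several steps. First, I would localize: since $f$ is bounded away from $0$ outside any neighborhood of $\mathbf 0$, and the integral converges for some $k_0>0$, the standard argument $\int_{R\setminus B_\delta}e^{-kf}g = e^{-(k-k_0)\inf_{R\setminus B_\delta}f}\int_{R\setminus B_\delta}e^{-k_0 f}g$ shows the contribution from outside a small ball $B_\delta$ around $\mathbf 0$ decays exponentially in $k$, hence is $o(k^{-(N+\lambda)/\nu})$. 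So it suffices to work on $B_\delta$ for $\delta$ as small as we like.

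Second, on $B_\delta$ I would pass from $f$ to $\widetilde f$. Writing $f = \widetilde f + \varepsilon$ with $\varepsilon(\rho,\Omega)=o(\rho^{N+\nu})$, we have $e^{-kf}=e^{-k\widetilde f}(1 + (e^{-k\varepsilon}-1))$. The point is to estimate $\int_{B_\delta} e^{-k\widetilde f}|e^{-k\varepsilon}-1|\,|g|\,d^d\mathbf x$. Using $|e^{-k\varepsilon}-1|\le k|\varepsilon|e^{k|\varepsilon|}$ and the fact that $|\varepsilon|$ is small compared to $\widetilde f\asymp\rho^\nu$ near $\mathbf 0$ (so that $e^{k|\varepsilon|}$ can be absorbed into a slightly rescaled exponential $e^{-\frac12 k\widetilde f}$ on $B_\delta$), together with the crude bound $|g|=O(\rho^{\lambda-d})$, one reduces to estimating $k\int_{B_\delta} \rho^{N+\nu}\rho^{\lambda-d}e^{-\frac12 k\widetilde f}\,d^d\mathbf x$; passing to polar coordinates and using $\widetilde f\ge c\rho^\nu$ this is $O(k^{-(N+\lambda)/\nu})$ with an extra power of $\rho$ to spare, hence $o(k^{-(N+\lambda)/\nu})$. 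A parallel and simpler estimate handles replacing $g$ by $\widetilde g:=\rho^{\lambda-d}\sum_{j=0}^N g_j(\Omega)\rho^j$. Thus $\int_R e^{-kf}g\,d^d\mathbf x = \int_{B_\delta}e^{-k\widetilde f}\widetilde g\,d^d\mathbf x + o(k^{-(N+\lambda)/\nu})$.

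Third, I would convert the integral over $B_\delta$ into a radial Laplace integral in the variable $t=\widetilde f$. For $\delta$ small, $\widetilde f$ restricted to $B_\delta$ has level sets $\widetilde R_t\cap B_\delta$ for $t\in(0,a)$, and the coarea/layer-cake formula gives $\int_{B_\delta}e^{-k\widetilde f}\widetilde g\,d^d\mathbf x = \int_0^a e^{-kt}\,d\!\left(\int_{\widetilde R_t\cap B_\delta}\widetilde g\,d^d\mathbf x\right)$; integrating by parts in $t$ (the boundary term at $t=a$ again being exponentially small and the one at $t=0$ vanishing since the enclosed volume $\to 0$) turns this into $k\int_0^a e^{-kt}\big(\int_{\widetilde R_t\cap B_\delta}\widetilde g\,d^d\mathbf x\big)\,dt$. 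Finally I would remove the restriction to $B_\delta$ and swap $\widetilde g$ back for $g$ inside the inner integral: for $t$ small, $\widetilde R_t\subset B_\delta$ automatically, and the difference between $\int_{\widetilde R_t}\widetilde g$ and $\widetilde G(t)=\int_{\widetilde R_t}g$ is $O(t^{(N+\nu+\lambda)/\nu})$ (same $o(\rho^{N+\nu})$-type estimate as before, now integrated over $\widetilde R_t$), whose contribution to $k\int_0^a e^{-kt}(\cdots)dt$ is again $o(k^{-(N+\lambda)/\nu})$ by Watson's lemma-type reasoning. Collecting the errors gives the claimed identity.

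The main obstacle I anticipate is the second step: carefully controlling the multiplicative error $e^{-k\varepsilon}-1$ uniformly on $B_\delta$, where one has to play off the smallness of $|\varepsilon|$ relative to $\rho^\nu$ (coming from the $o(\rho^{N+\nu})$ hypothesis, which is only an asymptotic statement as $\rho\to0$, not a global bound) against the potentially large factor $e^{k|\varepsilon|}$, all while $g$ is allowed the mild singularity $\rho^{\lambda-d}$ at the origin. Getting the bookkeeping right — in particular choosing $\delta=\delta(k)$ or splitting $B_\delta$ into an inner region where the asymptotics dominate and a shell where only convergence of the original integral is used — is where the real work lies; the coarea-formula step and the integration by parts are essentially routine once the regularity of the level sets of $\widetilde f$ near $\mathbf 0$ is noted.
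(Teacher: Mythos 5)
Your architecture (localize, replace $f$ and $g$ by their truncations via a multiplicative error estimate, then pass to the distribution function of $\widetilde f$) is genuinely different from the paper's, which never expands $e^{-k(f-\widetilde f)}$: instead it reduces to $g\ge 0$, sandwiches $f_-\le f\le f_+$ with $f_\pm:=\widetilde f\pm\varepsilon\rho^{N+\nu}$, uses monotonicity to trap both $I(k)$ and $k\int_0^a e^{-kt}\widetilde G(t)\,dt$ between $k\int_0^a e^{-kt}G_\pm(t)\,dt$, and then invokes the Fulks--Sather estimate $k\int_0^a e^{-kt}(G_--G_+)\,dt=\varepsilon c\,k^{-(N+\lambda)/\nu}+o(k^{-(N+\lambda)/\nu})$; letting $\varepsilon\to0$ gives the little-$o$. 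Your route could be made to work, but as written it contains a genuine gap at exactly the point you identify as the crux.

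The gap is in your second step: the claim that $k\int_{B_\delta}\rho^{N+\nu}\rho^{\lambda-d}e^{-\frac12 k\widetilde f}\,d^d\mathbf x$ is ``$O(k^{-(N+\lambda)/\nu})$ with an extra power of $\rho$ to spare, hence $o(k^{-(N+\lambda)/\nu})$'' is false. In polar coordinates this integral is $\asymp k\int_0^\delta\rho^{N+\nu+\lambda-1}e^{-\frac12 kc\rho^\nu}d\rho\asymp k^{-(N+\lambda)/\nu}$ on the nose --- there is no spare power, so the step as stated only yields $O$, not $o$, of the very order you are trying to beat. (The same overclaim recurs in step three: $\int_{\widetilde R_t}(\widetilde g-g)$ is $o(t^{(N+\lambda)/\nu})$, not $O(t^{(N+\nu+\lambda)/\nu})$; the hypotheses on $f$ and $g$ are only little-$o$ statements, so no power is ever gained.) The correct way to close your argument is the one the paper's sandwich is built around: for every $\eta>0$ choose $\delta$ so small that $|f-\widetilde f|\le\eta\rho^{N+\nu}$ on $B_\delta$, obtain an error bound $\eta\cdot Ck^{-(N+\lambda)/\nu}+O(e^{-kA})$ with $C$ independent of $\eta$, and let $\eta\to0$. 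You gesture at this tension in your final paragraph, but you resolve it with a nonexistent power gain rather than with the arbitrary-$\eta$ limit, so the proof does not close as written. A secondary point: your estimates use $|g|$ and the identity you write for the exterior contribution implicitly assumes absolute convergence of $\int_{R\setminus B_\delta}e^{-k_0f}g$; the paper disposes of this by first reducing to $g\ge0$ following Fulks and Sather, and you would need the same reduction or a remark that $|g|=O(\rho^{\lambda-d})$ is integrable near $\mathbf 0$.
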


\begin{remark}
As noted in the remarks following Theorem \ref{thm:main}, if we make the slightly stronger assumption (\ref{eqn:ghyp}), then we can conclude that
\[
\int_{R}e^{-kf}g~d^{d}\mathbf{x}=k\int_{0}^{a}e^{-kt}\widetilde{G}(t)dt+O(k^{-(N+\lambda+min\{1,\beta\})/\nu}),~k\rightarrow\infty
\]
\end{remark}

\begin{proof}
Fulks and Sather \cite[p188--189]{Fulks-Sather} show that we may assume without loss of generality that $g\geq0$. Also, if $\int_{R}e^{-kf} g\,d^{d}\mathbf{x}$ converges for $k_{0}>0$, then it converges for all $k>k_{0}$ since $e^{-k_{0}f}g$ is a dominating function for the integrand. We henceforth assume $k>k_{0}.$

Given $\varepsilon>0$, define the functions $f_{+}(\rho,\Omega)$ and $f_{-}(\rho,\Omega)$ by
\[
f_{\pm}(\rho,\Omega):=\rho^{\nu}\sum_{j=0}^{N}f_{j}(\Omega)\rho^{j}\pm\varepsilon\rho^{N+\nu},
\]
and let
\[
R_{t}^{\pm}:=\{\mathbf{x}:f_{\pm}(\mathbf{x})\leq t\}.
\]
Choose $\rho_{0}>0$ so small that for $0<\rho<\rho_{0}$

\begin{t_enumerate}
\item $\left\vert f(\rho,\Omega)-\rho^{\nu}\sum_{j=0}^{N}f_{j}(\Omega)\rho^{j}\right\vert <\varepsilon\rho^{N+\nu},$

\item $\left\vert g(\rho,\Omega)-\rho^{\lambda-d}\sum_{j=0}^{N}g_{j}(\Omega)\rho^{j}\right\vert \,<\varepsilon\rho^{N+\lambda-d},$

\item $f_{\pm}(\rho,\Omega)$ is increasing in $\rho$ for each $\Omega$, and

\item $R_{0}:=\{\rho\leq\rho_{0}\}\subset R$.
\end{t_enumerate}

The first two conditions are obtainable from the assumed asymptotic series for $f$ and $g$ as $\rho\rightarrow0$, the third condition is insured by the assumption that $f_{0}>0$ away from $\mathbf{0}$, and the last condition follows from the assumption that $\mathbf{0}$ is a point in the interior of $R$.

Then
\[
I(k):=\int_{R}e^{-kf}g\,d^{d}\mathbf{x}%
\]
can be written as
\begin{equation}
I(k)=\int_{R_{0}}e^{-kf}g\,d^{d}\mathbf{x}+\int_{R\setminus R_{0}}e^{-kf}g\,d^{d}\mathbf{x}\mathbf{.} \label{eqn:FSlemma-int1}
\end{equation}
Since we assume that, away from $\mathbf{0,}$ the function $f$ is bounded away from $0$, there exists $A>0$ such that $f\geq A$ for $\rho\geq\rho_{0}$, which implies the second integral above is $O(e^{-kA})$ as $k\rightarrow\infty$.

Next, define
\[
I_{\pm}(k):=\int_{R_{0}}e^{-kf_{\pm}}g\,d^{d}\mathbf{x}\mathbf{,}
\]
and
\[
G_{\pm}(t):=\int_{R_{t}^{\pm}}g\,d^{d}\mathbf{x}.
\]
Since $f_{-}\leq\widetilde{f}\leq f_{+}$, we have
\begin{equation}
R_{t}^{+}\subset\widetilde{R}_{t}\subset R_{t}^{-}. \label{eqn:FSlemmaRs}
\end{equation}
Choose $a>0$ so small that $R_{a}^{-}\subset R_{0}.$ Combining \cite[Lemma 2]{Fulks-Sather} with the same argument we used above to conclude that the second integral in (\ref{eqn:FSlemma-int1}) is exponentially small, we obtain
\begin{align}
I_{\pm}(k)  &  =\int_{R_{a}^{\pm}}e^{-kf_{\pm}}g\,d^{d}\mathbf{x}+\int_{R_{0}\setminus R_{a}^{\pm}}e^{-kf_{\pm}}g\,d^{d}\mathbf{x}\nonumber\\
&  \mathbf{=}\int_{0}^{a}e^{-kt}dG_{\pm}(t)+O(e^{-kA_{\pm}})\label{eqn:FSlemma-Ipmestimate}\\
&  =k\int_{0}^{a}e^{-kt}G_{\pm}(t)dt+O(e^{-kA_{\pm}})\text{ \qquad(integration by parts)}\nonumber
\end{align}
for some positive constants $A_{\pm}.$

By (\ref{eqn:FSlemmaRs}) we have
\begin{equation}
\int_{0}^{a}e^{-kt}G_{+}(t)dt\leq\int_{0}^{a}e^{-kt}\widetilde{G}(t)dt\leq \int_{0}^{a}e^{-kt}G_{-}(t)dt. \label{eqn:FSlemma-Gineq}
\end{equation}
We also have (by the definition of $f_{\pm}$)
\[
I_{+}(k)\leq\int_{R_{o}}e^{-kf}g\,d^{d}\mathbf{x}\leq I_{-}(k)
\]
which, by (\ref{eqn:FSlemma-Ipmestimate}), and (\ref{eqn:FSlemma-int1}) and
the subsequent argument, gives
\[
k\int_{0}^{a}e^{-kt}G_{+}(t)dt+O(e^{-kA_{+}})\leq I(k)+O(e^{-kA})\leq k\int_{0}^{a}e^{-kt}G_{-}(t)dt+O(e^{-kA_{-}}).
\]
Subtracting (\ref{eqn:FSlemma-Gineq}) then yields
\begin{align}
k\int_{0}^{a}e^{-kt}(G_{+}(t)-G_{-}(t))dt+O(e^{-kA_{+}})  &  \leq I(k)-k\int_{0}^{a}e^{-kt}\widetilde{G}(t)dt+O(e^{-kA})\label{eqn:FSlemma-bigineq}\\
&  \qquad\qquad\leq k\int_{0}^{a}e^{-kt}(G_{-}(t)-G_{+}(t))dt+O(e^{-kA_{-}}).\nonumber
\end{align}
Fulks and Sather \cite[p191]{Fulks-Sather} show that there is a constant\footnote{In fact, $c=\frac{2}{\nu}\Gamma\left(  \frac{N+\lambda}{\nu}\right)  \int_{S^{d-1}}g_{0}(\Omega)f_{0}(\Omega)^{-1-(N+\lambda)/\nu} d\Omega$.} $c>0$ such that
\[
k\int_{0}^{a}e^{-kt}(G_{-}(t)-G_{+}(t))dt=\varepsilon ck^{-(N+\lambda)/\nu}+o(k^{-(N+\lambda)/\nu}).
\]
Equation (\ref{eqn:FSlemma-bigineq}) thus becomes
\[
\left\vert I(k)-k\int_{0}^{a}e^{-kt}\widetilde{G}(t)dt\right\vert \leq\varepsilon ck^{-(N+\lambda)/\nu}+o(k^{-(N+\lambda)/\nu}).
\]
Hence we conclude that for every $\varepsilon>0$,
\[
\lim_{k\rightarrow\infty}\left\vert I(k)-k\int_{0}^{a}e^{-kt}\widetilde{G}(t)dt\right\vert k^{(N+\lambda)/\nu}\leq\varepsilon c,
\]
from which the lemma follows.
\end{proof}

\bigskip

Our proof of Theorem \ref{thm:main} is now essentially an estimate of the function $\widetilde{G}(t)$ using the inversion formula, Theorem \ref{thm:Frame}, of Frame.

\bigskip

\begin{proof}
[Proof of Theorem \ref{thm:main}]
As in the proof of Lemma \ref{lemma:Fulks-Sather}, we henceforth assume that $g$ is positive and that $k>k_{0}$ so $\int_{R}e^{-kf}g\,d^{d}\mathbf{x}$ converges. By Lemma \ref{lemma:Fulks-Sather}, it is enough to show that for $a>0$ sufficiently small, there exists an asymptotic expansion
\[
k\int_{0}^{a}e^{-kt}\widetilde{G}(t)dt=\sum_{j=0}^{N}\zeta_{j}k^{-(j+\lambda)/\nu}+o(k^{-(N+\lambda)/\nu}),~k\rightarrow\infty
\]
where the coefficients $\zeta_{j}$ are given by (\ref{eqn:gencoeffs}). To this end, choose $\rho_{0}$ small enough that $\widetilde{f}$ is increasing in $\rho$ for $0\leq\rho\leq\rho_{0},$ and then choose $a$ small enough that $\widetilde{R}_{a}:=\{\mathbf{x}:\widetilde{f}(\mathbf{x})\leq a\}\subset \{\rho\leq\rho_{0}\}$. Then for each $t$ with $0\leq t\leq a$, the equation $t=\widetilde{f}(\rho,\Omega)$ has a unique solution $\rho(t,\Omega)$ for $\rho$ which is continuous in $\Omega$. Substituting the series (\ref{eqn:g-series}) for $g$ and performing the integration over $\rho$ yields
\begin{equation}
\widetilde{G}(t)=\int_{S^{d-1}}\int_{0}^{\rho(t,\Omega)}g(\rho,\Omega)\rho^{d-1}d\rho\,d\Omega=\int_{S^{d-1}}\left[  \sum_{j=0}^{N}\frac{g_{j}(\Omega)}{j+\lambda}\rho^{j+\lambda}(t\,,\Omega)+o(\rho^{N+\lambda})\right]  d\Omega\label{eqn:Gtilde}
\end{equation}

Now we use Theorem \ref{thm:Frame} to explicitly estimate the powers of the inverse function $\rho(t,\Omega)$, thus obtaining explicit estimates for $\widetilde{G}(t)$.

Let
\[
a_{j}:=
\begin{cases}
f_{j}(\Omega) & \text{ for }0\leq j\leq N,\text{ and }\\
0 & \text{ for}j>N.
\end{cases}
\]
Then $t:=\widetilde{f}=\rho^{\nu}\sum_{j=0}^{\infty}a_{j}\rho^{j}$.

Let us briefly consider the inverse function of $t=\widetilde{f}(\rho,\Omega),$ for fixed $\Omega$, in more detail. Let $u=t^{1/\nu}$, so that $u=\rho\left(  \sum_{j=0}^{\infty}a_{j}\rho^{j}\right)  ^{1/\nu}.$ One may compute that $\left.  \frac{d}{d\rho}u\right\vert _{\rho=0}=a_{0}^{1/\nu}=f_{0}(\Omega)^{1/\nu}$, which is positive by assumption. Moreover, $u$ is an analytic function of $\rho$ in a neighborhood of $0$. Since $u(\rho=0)=0$, the inverse function is also analytic in a neighborhood of $0$, and we may express it as a series $\rho=u\sum_{j=0}^{\infty}b_{j}u^{j}.$ Taking positive powers of this series yields series $\rho^{q}=u^{q}\sum_{j=0}^{\infty}b_{j}^{(q)}u^{j}.$ In particular, the two series $u^{\nu}=\rho^{\nu}\sum_{j=0}^{\infty}a_{j}\rho^{j}$ and $\rho^{q}=u^{q}\sum_{j=0}^{\infty}b_{j}^{(q)}u^{j}$ satisfy the hypotheses of Theorem \ref{thm:Frame}.

Hence, with $u=t^{1/\nu},$ Theorem \ref{thm:Frame} yields $\rho^{q}=u^{q}\sum_{j-0}^{\infty}b_{j}^{(q)}u^{j},$ where the coefficients $b_{j}^{(q)}$ are given by (\ref{eqn:inv_coeffs}). Clearly, $b_{j}^{(q)}$ depends only on $\{a_{0},a_{1},\dots,a_{j}\}$. \ Moreover, the remainder after $N$ terms, $u^{q}\sum_{N+1}^{\infty}b_{j}^{(q)}(\Omega)u^{j},$ is uniformly bounded for $\Omega\in S^{d-1}$ and $0\leq u\leq a^{1/\nu};$ that is,
\[
u^{q}\sum_{j=N+1}^{\infty}b_{j}^{(q)}(\Omega)u^{j}=O(u^{N+1+q})=o(u^{N+q}).
\]

Since $u=t^{1/\nu},$ we get
\begin{equation}
\rho^{q}=t^{q/\nu}\sum_{j=0}^{N}b_{j}^{(q)}(\Omega)t^{j/\nu}+o(t^{(N+q)/\nu}).
\label{eqn:rho}
\end{equation}
Substituting (\ref{eqn:rho}) into the expression (\ref{eqn:Gtilde}) for $\widetilde{G}(t)$, we obtain
\begin{equation}
\widetilde{G}(t)=\sum_{j=0}^{N}\sum_{l=0}^{N}t^{\frac{l+j+\lambda}{\nu}} \left[\frac{1}{j+\lambda}\int_{S^{d-1}}g_{j}(\Omega)b_{l}^{(j+\lambda)}(\Omega)d\Omega\right]  +o(t^{\frac{N+\lambda}{\nu}}). \label{eqn:G_long}
\end{equation}
After some rearrangement,\footnote{To obtain (\ref{eqn:G_short}), define $h_{j,l}:=\frac{1}{j+\lambda}\int_{S^{d-1}}g_{j}(\Omega)b_{l}^{(j+\lambda)}(\Omega)d\Omega.$ Then the double sum in (\ref{eqn:G_long}) is $\sum_{j=0}^{N}\sum_{l=0}^{N}h_{j,l}t^{\frac{j+l+\lambda}{\nu}}.$ Of course, if both $j$ and $l$ are large, then the corresponding term is absorbed into the error term. So in fact we only need consider
\[
\sum_{j=0}^{N}\sum_{l+j\leq N}h_{j,l}t^{\frac{j+l+\lambda}{\nu}} =\sum_{j=0}^{N}\sum_{l=0}^{N-j}h_{j,l}t^{\frac{j+l+\lambda}{\nu}}=\sum_{m=0}^{N}\sum_{l+j=m}h_{j,l}t^{\frac{m+\lambda}{\nu}} =\sum_{m=0}^{N}\sum_{l=0}^{m}h_{m-l,l}t^{\frac{m+\lambda}{\nu}}.
\]
Putting $h_{m-l,l}$ into this expression then yields (\ref{eqn:G_short}).} one obtains
\begin{equation}
\widetilde{G}(t)=\sum_{j=0}^{N}\eta_{j}t^{\frac{j+\lambda}{\nu}}+o(t^{\frac{N+\lambda}{\nu}}) \label{eqn:G_short}
\end{equation}
where
\[
\eta_{j}=\sum_{l=0}^{j}\frac{1}{j-l+\lambda}\int_{S^{d-1}}g_{j-l}(\Omega)b_{l}^{(j-l+\lambda)}(\Omega)d\Omega.
\]

Finally, recall that for $s>-1$
\[
k\int_{0}^{a}e^{-kt}t^{s}dt=k^{-s}\Gamma(s+1)+o(k^{-\infty}).
\]
We multiply $\widetilde{G}(t)$ by $e^{-kt}$ and integrate termwise to get
\begin{align*}
k\int_{0}^{a}e^{-kt}\widetilde{G}(t)dt  &  =k\int_{0}^{a}e^{-kt}\left(\sum_{j=0}^{N}\eta_{j}t^{\frac{j+\lambda}{\nu}}+o(t^{\frac{N+\lambda}{\nu}})\right)  dt+O(k^{-\infty})\\
&  =\sum_{j=0}^{N}\zeta_{j}k^{-(j+\lambda)/\nu}+O(k^{-(N+\lambda)/\nu})
\end{align*}
where, since $\Gamma(s+1)=s\Gamma(s),$
\[
\zeta_{j}=\tfrac{1}{\nu}\Gamma(\tfrac{j+\lambda}{\nu})\int_{S^{d-1}}\left[f_{0}(\Omega)^{-(j+\lambda)/\nu} \sum_{m=0}^{j}g_{j-m}(\Omega)\sum_{r=1}^{m}\binom{-\frac{j+\lambda}{\nu}}{r}\frac{f_{m}^{(r)}(\Omega)}{f_{0}(\Omega)^{r}}\right]  d\Omega.
\]
\end{proof}

\bigskip

We conclude this section with a short discussion of the combinatorial quantities which arise in Theorem \ref{thm:main}. The quantities
\begin{equation}
f_{m}^{(r)}=\sum_{\substack{n_{1}+n_{2}+\cdots+n_{r}=m\\n_{i}\geq1,~i=1,\dots,r}}f_{n_{1}}f_{n_{2}}\cdots f_{n_{r}} \label{eqn:fmr}
\end{equation}
also arise in the $1$-dimensional case studied by Wojdylo in \cite{WojdyloSIAM}, \cite{WojdyloJCAM}, where he calls them ``partial ordinary Bell polynomials.'' They are relatively simple combinatorial objects to compute; for example, $f_{n}^{(r)}$ may be computed as the coefficient of $x^{n}$ appearing in $(f_{1}x+f_{2}x^{2}+f_{3}x^{3}+\cdots)^{r}$. The following result describes a simple recursive algorithm for their computation. Explicit algorithms suitable for computer implementation may be found in \cite{WojdyloSIAM}, \cite{WojdyloJCAM}.

\begin{proposition}
For $n>0$ we have $f_{n}^{(1)}=f_{n}$, and for $1<r\leq n,$ the coefficients $f_{n}^{(r)}$ can be computed recursively via $\displaystyle{f_{n}^{(r)}=\sum_{j=r-1}^{n-1}f_{n-j}f_{j}^{(r-1)}.}$
\end{proposition}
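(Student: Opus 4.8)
The plan is to establish both claims directly from the defining formula
\[
f_{n}^{(r)}=\sum_{\substack{n_{1}+\cdots+n_{r}=n\\n_{i}\geq1}}f_{n_{1}}\cdots f_{n_{r}},
\]
treating $f_{n}^{(r)}$ as the coefficient of $x^{n}$ in $F(x)^{r}$, where $F(x):=f_{1}x+f_{2}x^{2}+\cdots$. The base case is immediate: $f_{n}^{(1)}$ is by definition the single-index sum $\sum_{n_{1}=n}f_{n_{1}}=f_{n}$, or equivalently the coefficient of $x^{n}$ in $F(x)$.

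For the recursion, I would write $F(x)^{r}=F(x)\cdot F(x)^{r-1}$ and extract the coefficient of $x^{n}$ from the product. Concretely, $f_{n}^{(r)}=\sum_{i+j=n}[x^{i}]F(x)\cdot[x^{j}]F(x)^{r-1}=\sum_{i+j=n}f_{i}^{(1)}f_{j}^{(r-1)}=\sum_{i+j=n}f_{i}\,f_{j}^{(r-1)}$ (with $f_{0}=0$, so $f_{0}^{(1)}=0$). One then observes that the only nonzero terms occur when $i\geq1$ (since $f_{0}=0$, equivalently since $F$ has no constant term) and $j\geq r-1$ (since $F(x)^{r-1}$ has a zero of order exactly $r-1$ at $x=0$, so $f_{j}^{(r-1)}=0$ for $j<r-1$). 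Reindexing by $j$ running from $r-1$ to $n-1$ and writing $i=n-j$, this collapses to $f_{n}^{(r)}=\sum_{j=r-1}^{n-1}f_{n-j}\,f_{j}^{(r-1)}$, which is the asserted formula; the condition $1<r\leq n$ simply guarantees that the index range is nonempty and that $r-1\geq1$ so that $f_{j}^{(r-1)}$ is a genuine iterate.

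Alternatively, and perhaps more cleanly, one can argue purely combinatorially without generating functions: in the sum defining $f_{n}^{(r)}$, peel off the first factor by fixing $n_{1}=i$. Then $\sum_{i}f_{i}\sum_{n_{2}+\cdots+n_{r}=n-i,\ n_{\ell}\geq1}f_{n_{2}}\cdots f_{n_{r}}=\sum_{i\geq1}f_{i}\,f_{n-i}^{(r-1)}$, and since an ordered sum of $r-1$ positive integers is at least $r-1$, the inner factor vanishes unless $n-i\geq r-1$, i.e. $i\leq n-(r-1)$; setting $j=n-i$ gives the range $r-1\leq j\leq n-1$ and the stated identity. Either route is a routine bookkeeping exercise, so there is no real obstacle here — the only point requiring a moment's care is correctly accounting for the vanishing of boundary terms ($f_{0}=0$ on one side, $f_{j}^{(r-1)}=0$ for $j<r-1$ on the other), which is exactly what fixes the summation limits $r-1$ and $n-1$.
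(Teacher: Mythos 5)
Your proof is correct; in fact the paper states this proposition without proof, remarking only that $f_{n}^{(r)}$ is the coefficient of $x^{n}$ in $\left(f_{1}x+f_{2}x^{2}+\cdots\right)^{r}$, which is exactly the generating-function viewpoint your first argument makes precise (and your combinatorial peel-off is an equally valid alternative). Both routes correctly identify the vanishing of the boundary terms ($f_{0}=0$ and $f_{j}^{(r-1)}=0$ for $j<r-1$) as the source of the summation limits, so nothing is missing.
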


%%%%%%%%%%%%%%%%%%%%%%%%%%%%%%%%%%%%%%%%%%%%%%%%%%%%%%%%%%%%%%%%%%
\section{Further Simplifications\label{sec:corollaries}}
%%%%%%%%%%%%%%%%%%%%%%%%%%%%%%%%%%%%%%%%%%%%%%%%%%%%%%%%%%%%%%%%%%

In this section, we make additional hypotheses which allow one to evaluate the integrals which appear in Theorem \ref{thm:main}. In Section \ref{sec:f0consttaylor}, we assume first that $f_0(\Omega)$ is constant, that is, that $f/\rho^2$ is continuous at the origin, which allows us to make slight simplifications to the coefficients $\xi_j.$ We then assume that $f$ and $g$ are smooth enough to admit at least finite order Taylor series expansions, and evaluate the integrals appearing in the coefficients $\xi_j$ of Theorem \ref{thm:main} explicitly, thus yielding an asymptotic expansion of $\int_R e^{-kf}g\,d^d\mathbf{x}$ in terms of the Taylor coefficients. In Section \ref{sec:f0constv=2}, we make the assumption that $f$ attains a nondegenerate minimum at $\mathbf{0}$ (this is what is traditionally assumed to make Laplace's approximation) and also that $f$ and $g$ are smooth enough to admit at least finite order Taylor series expansions. These hypotheses also allow us to evaluate the integrals appearing in the coefficients $\xi_j$ explicitly, and lead us to a proof of Theorem \ref{thm:best}, which was stated in the introduction.

\subsection{Simplifications with $f_{0}(\Omega)$ constant, and with enough smoothness for Taylor
series.\label{sec:f0consttaylor}}

First, we simplify Theorem \ref{thm:main} under the assumption that $f_{0}(\Omega)$ is constant. Then, we show that if $f_{0}(\Omega)$ is constant and $f$ and $g$ are given as Taylor series with remainder, the integrals appearing in the coefficients $\zeta_{j}$ in Theorem \ref{thm:main} can be explicitly computed to obtain a closed formula for $\zeta_{j}$ involving the coefficients of the Taylor series.

\bigskip

If $f_{0}(\Omega)=f_{0}$ is constant, then the coefficients of Theorem \ref{thm:main} simplify significantly since the denominators consist entirely of terms involving $f_{0}(\Omega)$. In general, one does not expect $f_{0}(\Omega)$ to be constant, but in the common case that $\nu=2$ (discussed in \ref{sec:f0constv=2} below), an appropriate coordinate change guarantees $f_{0}(\Omega)=1/2.$

Before stating the main asymptotic expansion with the additional hypothesis that $f_{0}$ is constant, we mention an easy lemma characterizing the condition that $f_{0}$ is constant.

\begin{lemma}
Suppose $f$ is a measurable function on a measurable set $R\subset \mathbb{R}^{d}$ containing $\mathbf{0}$ as an interior point, such that for some positive integer $N$ and some positive real number $\nu$
\[
f(\rho,\Omega)=\rho^{\nu}\sum_{j=0}^{N}f_{j}(\Omega)\rho^{j}+o(\rho^{N+\nu}).
\]
Then $\rho^{-\nu}f(\rho,\Omega)$ is continuous at $\rho=0$ if and only if $f_{0}(\Omega)$ is constant.
\end{lemma}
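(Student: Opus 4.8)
The plan is to observe that $\rho^{-\nu}f(\rho,\Omega)$ has, along each ray through the origin, a limit equal to $f_{0}(\Omega)$, so that continuity at the origin is equivalent to these ray-limits being independent of $\Omega$. The two implications then fall out almost immediately from the hypothesized expansion.

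More concretely, I would first set $h(\mathbf{x}):=|\mathbf{x}|^{-\nu}f(\mathbf{x})$ for $\mathbf{x}$ near but not equal to $\mathbf{0}$, and interpret ``$\rho^{-\nu}f$ continuous at $\rho=0$'' as the assertion that $h$ extends to a function continuous at $\mathbf{0}$, i.e.\ that $\lim_{\mathbf{x}\to\mathbf{0}}h(\mathbf{x})$ exists. Dividing the hypothesized expansion by $\rho^{\nu}$ gives
\[
h(\rho,\Omega)=\sum_{j=0}^{N}f_{j}(\Omega)\rho^{j}+o(\rho^{N})\quad\text{as }\rho\to 0 .
\]

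For the direction ``continuous $\Rightarrow$ $f_{0}$ constant'': assuming the limit $L=\lim_{\mathbf{x}\to\mathbf{0}}h(\mathbf{x})$ exists, I would fix $\Omega\in S^{d-1}$ and let $\mathbf{x}\to\mathbf{0}$ along the ray $\mathbf{x}=\rho\Omega$ (legitimate for small $\rho$ since $\mathbf{0}$ is interior to $R$); the displayed expansion gives $h(\rho\Omega)\to f_{0}(\Omega)$, hence $f_{0}(\Omega)=L$ for every $\Omega$, so $f_{0}$ is constant. For the converse: if $f_{0}\equiv c$, then $h(\rho,\Omega)-c=\sum_{j=1}^{N}f_{j}(\Omega)\rho^{j}+o(\rho^{N})$; since each $f_{j}$ is continuous on the compact sphere $S^{d-1}$ it is bounded, so the right-hand side is $O(\rho)+o(\rho^{N})\to 0$ as $\rho\to 0$, and therefore $h(\mathbf{x})\to c$ as $\mathbf{x}\to\mathbf{0}$, i.e.\ $h$ extends continuously at the origin.

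There is no substantial obstacle here; the only point to keep honest is that the converse direction uses the remainder $o(\rho^{N+\nu})$ in the hypothesis to be uniform in $\Omega$, so that ray-by-ray convergence upgrades to a genuine limit as $\mathbf{x}\to\mathbf{0}$ — and this uniformity is the standing convention for the asymptotic expansions used throughout the paper.
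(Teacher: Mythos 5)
Your proof is correct and follows essentially the same route as the paper, whose entire argument is the one-line observation that $\rho^{-\nu}f$ is continuous at $\rho=0$ if and only if $c:=\lim_{\rho\to 0}\rho^{-\nu}f(\rho,\Omega)=f_{0}(\Omega)$ is constant. You simply spell out the two implications and the uniformity-in-$\Omega$ of the remainder that the paper leaves implicit.
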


\begin{proof}
The function $\rho^{-\nu}f(\rho,\Omega)$ is continuous at $\rho=0$ if and only if $c:=\lim_{\rho\rightarrow0}\rho^{-\nu}f(\rho,\Omega)=f_{0}(\Omega)$ is constant.
\end{proof}

\begin{corollary}
\label{cor:f0const}
If, in addition to the hypotheses of Theorem \ref{thm:main}, $\rho^{-\nu}f(\rho,\Omega)$ is continuous at $\rho=0$, so that $f_{0}(\Omega)=f_{0}$ is constant, then there exists an asymptotic expansion
\begin{equation}
\int_{R}e^{-kf}g\,d^{d}\mathbf{x}=\sum_{j=0}^{N}\zeta_{j}k^{-(j+\lambda)/\nu}+o(k^{-(N+\lambda)/\nu}),~k\rightarrow\infty
\end{equation}
where the coefficients are given by
\[
\zeta_{j}=\frac{1}{\nu}\Gamma\left(  \tfrac{j+\lambda}{\nu}\right)f_{0}^{-(j+\lambda)/\nu}\sum_{m=0}^{j}\sum_{r=1}^{m} \binom{-\frac{j+\lambda}{\nu}}{r}f_{0}^{-r}\int_{S^{d-1}}g_{j-m}(\Omega)f_{m}^{(r)}(\Omega)d\Omega.
\]
Empty sums are understood to be $1$.
\end{corollary}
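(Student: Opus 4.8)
The plan is to specialize the formula (\ref{eqn:gencoeffs}) of Theorem \ref{thm:main} to the situation at hand; no new analysis is required. First I would invoke the lemma immediately preceding the corollary: the hypothesis that $\rho^{-\nu}f(\rho,\Omega)$ is continuous at $\rho=0$ is equivalent to $f_{0}(\Omega)$ being a constant, which we call $f_{0}$, and necessarily $f_{0}>0$ because $f_{0}(\Omega)>0$ is assumed in Theorem \ref{thm:main}. Since all the hypotheses of Theorem \ref{thm:main} are in force, the asymptotic expansion (\ref{eqn:asympexp}) and the coefficient formula (\ref{eqn:gencoeffs}) hold verbatim, so it only remains to rewrite the right-hand side of (\ref{eqn:gencoeffs}).

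Write $\eta_{j}(\Omega)$ for the bracketed integrand in (\ref{eqn:gencoeffs}),
\[
\eta_{j}(\Omega)=f_{0}(\Omega)^{-(j+\lambda)/\nu}\sum_{m=0}^{j}g_{j-m}(\Omega)\sum_{r=1}^{m}\binom{-\frac{j+\lambda}{\nu}}{r}\frac{f_{m}^{(r)}(\Omega)}{f_{0}(\Omega)^{r}}.
\]
With $f_{0}(\Omega)\equiv f_{0}$, every factor in $\eta_{j}(\Omega)$ other than $g_{j-m}(\Omega)$ and $f_{m}^{(r)}(\Omega)$ is independent of $\Omega$: the prefactor $f_{0}^{-(j+\lambda)/\nu}$, the binomial coefficient $\binom{-\frac{j+\lambda}{\nu}}{r}$ (which never depended on $\Omega$), and $f_{0}^{-r}$. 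The sums over $m$ and $r$ are finite, so they may be interchanged with $\int_{S^{d-1}}(\cdot)\,d\Omega$ with no convergence issues, and the $\Omega$-independent factors pull out of the integral. Hence
\[
\int_{S^{d-1}}\eta_{j}(\Omega)\,d\Omega=f_{0}^{-(j+\lambda)/\nu}\sum_{m=0}^{j}\sum_{r=1}^{m}\binom{-\frac{j+\lambda}{\nu}}{r}f_{0}^{-r}\int_{S^{d-1}}g_{j-m}(\Omega)f_{m}^{(r)}(\Omega)\,d\Omega,
\]
and multiplying by $\tfrac{1}{\nu}\Gamma\!\left(\tfrac{j+\lambda}{\nu}\right)$ yields exactly the stated expression for $\zeta_{j}$. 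The convention that empty sums equal $1$ is inherited unchanged from Theorem \ref{thm:main}.

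There is essentially no obstacle here: the entire content of the corollary is already contained in Theorem \ref{thm:main}, and this step is a bookkeeping simplification. The only point deserving a word of care is the legitimacy of pulling constants outside the integral and of writing $\int_{S^{d-1}}g_{j-m}(\Omega)f_{m}^{(r)}(\Omega)\,d\Omega$ at all; but the former is just linearity of the integral over finitely many terms, and the latter is automatic because the proof of Theorem \ref{thm:main} produced precisely these integrals (via (\ref{eqn:G_long})) over the compact manifold $S^{d-1}$ with the continuous function $f_{m}^{(r)}$ and the integrable function $g_{j-m}$. Thus the corollary follows immediately.
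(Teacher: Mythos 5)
Your proposal is correct and is exactly the intended argument: the paper states Corollary \ref{cor:f0const} without a separate proof precisely because it follows by substituting the constant $f_{0}$ into (\ref{eqn:gencoeffs}) and pulling the $\Omega$-independent factors out of the finite sums and the integral over $S^{d-1}$. Nothing is missing.
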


\bigskip

Now we suppose further that $f$ and $g$ are smooth enough to be given as Taylor series (with remainder), and give closed form expressions for the coefficients appearing in Theorem \ref{thm:main} (still with the assumption that $f_{0}(\Omega)$ is constant); that is, we explicitly evaluate the spherical integrals appearing in the coefficients $\zeta_{j}$ of Theorem \ref{thm:main} to obtain formulas in terms of the Taylor coefficients.

To relate the Taylor series expansions with the radial expansions of Theorem \ref{thm:main}, we introduce spherical coordinates $(\rho,\phi_{1},\phi_{2},\dots,\phi_{d-1})$ on $\mathbb{R}^{d}$; these are related to the Cartesian coordinates $\mathbf{x}=(x^{1},\dots,x^{d})$ via
\begin{align}
x^{1}  &  =\rho\cos\phi_{1},\ \nonumber\\
x^{2}  &  =\rho\sin\phi_{1}\cos\phi_{2},\nonumber\\
&  \vdots\label{eqn:sphericalcoords}\\
x^{d-1}  &  =\rho\sin\phi_{1}\sin\phi_{2}\cdots\sin\phi_{d-2}\cos\phi_{d-1},\nonumber\\
x^{d}  &  =\rho\sin\phi_{1}\sin\phi_{2}\cdots\sin\phi_{d-2}\sin\phi_{d-1}.\nonumber
\end{align}
The coordinate ranges are $0\leq\rho<\infty,$ $0\leq\phi_{j}<\pi$ for $j=1,\dots,d-2$ and $0\leq\phi_{d-1}<2\pi$. The solid angle element in spherical coordinates is
\begin{equation}
d\Omega=\sin^{d-2}\phi_{1}\sin^{d-3}\phi_{2}\cdots\sin\phi_{d-2}d\phi_{1}\cdots d\phi_{d-1}. \label{eqn:solidangle}
\end{equation}
According to (\ref{eqn:sphericalcoords}), the unit vector $\Omega=\mathbf{x}/\rho$ may be expressed as
\[
\Omega=(\cos\phi_{1},\sin\phi_{1}\cos\phi_{2},\dots,\sin\phi_{1}\sin\phi_{2}\cdots\sin\phi_{d-1}).
\]

Suppose $f$ and $g$ are given as Taylor series with remainder, in standard multi-index notation, as
\begin{equation}
f(\mathbf{x})=\sum_{\left\vert \alpha\right\vert =\nu}^{N+\nu}\frac{1}{\alpha!}D^{\alpha}f(\mathbf{0})\mathbf{x}^{\alpha}+R_{N+\nu+1}(\mathbf{x}),
\label{eqn:Taylorf1}
\end{equation}
for some integer $\nu>0$ (actually, if $f$ is to have a unique minimum of $0$ at $\mathbf{0}$, the parameter $\nu$ must be an even integer $\geq2$), and
\begin{equation}
g(\mathbf{x})=\sum_{\left\vert \beta\right\vert =0}^{N}\frac{1}{\beta!}D^{\beta}g(\mathbf{0})\mathbf{x}^{\beta}+R_{N+1}^{\prime}(\mathbf{x}).
\label{eqn:Taylorg1}
\end{equation}
Observe that (\ref{eqn:Taylorg1}) implies $\lambda=d$. To compare these expressions with the radial power series used in Theorem \ref{thm:main}, we write (\ref{eqn:Taylorf1}) and (\ref{eqn:Taylorg1}) in spherical coordinates to obtain
\begin{align*}
f(\rho,\Omega)  &  =\sum_{j=\nu}^{N+\nu}\rho^{j}\sum_{\left\vert\alpha\right\vert =j}\frac{D^{\alpha}f(\mathbf{0})}{\alpha!}\Omega^{\alpha}+O(\rho^{N+\nu+1})\text{ and}\\
g(\rho,\Omega)  &  =\sum_{j=0}^{N}\rho^{j}\sum_{\left\vert \alpha\right\vert=j}\frac{D^{\alpha}g(\mathbf{0})}{\alpha!}\Omega^{\alpha}+O(\rho^{N+1})
\end{align*}
from which we see that
\begin{equation}
f_{j}(\Omega)=\sum_{\left\vert \alpha\right\vert =j+\nu}\frac{D^{\alpha}f(\mathbf{0})}{\alpha!}\Omega^{\alpha}\text{, and }g_{j}(\Omega
)=\sum_{\left\vert \alpha\right\vert =j}\frac{D^{\alpha}g(\mathbf{0})}{\alpha!}\Omega^{\alpha}. \label{eqn:fjgjTaylor}
\end{equation}

\begin{corollary}
\label{cor:taylor}If, in addition to the hypotheses of Theorem \ref{thm:main}, the functions $f$ and $g$ are smooth enough to be given as the Taylor series (\ref{eqn:Taylorf1}) and (\ref{eqn:Taylorg1}) where $f_{0}(\Omega)=f_{0}$ is constant, then there exists an asymptotic expansion
\[
\int_{R}e^{-kf}g\,d^{d}\mathbf{x}=k^{-d/\nu}\sum_{j=0}^{N}\zeta_{j}k^{-j/\nu}+O(k^{-(N+d+1)/\nu})
\]
where the coefficients are given by
\begin{align*}
\zeta_{j} &  =\tfrac{1}{\nu}\Gamma\left(  \tfrac{d+j}{\nu}\right) f_{0}^{-(j+d)/\nu}\sum_{m=0}^{j}\sum_{r=1}^{m}\binom{-\frac{d+j}{\nu}}{r}f_{0}^{-r}\\
&  \qquad\times\sum_{\left\vert \beta\right\vert =j-m}\sum_{\left\{\substack{n_{1}+n_{2}+\cdots+n_{r}=m\\n_{i}\geq1,~i=1,\dots,r}\right\}  } \sum_{\left\vert \alpha_{1}\right\vert =n_{1}+\nu}\cdots\sum_{\left\vert\alpha_{r}\right\vert =n_{r}+\nu}w_{\beta+\alpha_{1}+\cdots+\alpha_{r}} \frac{D^{\beta}g(\mathbf{0})D^{\alpha_{1}}f(\mathbf{0})\cdots D^{\alpha_{r}}f(\mathbf{0})}{\beta!\alpha_{1}!\cdots\alpha_{r}!}
\end{align*}
where for a multi-index $\alpha=(\alpha^{1},\alpha^{2},\dots,\alpha^{d})$,
\begin{equation}
w_{\alpha}:=(2\pi)^{d/2}\left\{
\begin{array}
[c]{c}%
1\text{ if }d\text{ even}\\
\sqrt{2/\pi}\text{ if }d\text{ odd}
\end{array}
\right\}  \frac{(\alpha-\mathbf{1})!!}{(\left\vert \alpha\right\vert
+d-2)!!}\operatorname*{even}(\alpha).\label{eqn:w}
\end{equation}
Empty sums are understood to be $1$.
\end{corollary}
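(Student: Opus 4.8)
The plan is to derive Corollary \ref{cor:taylor} directly from Corollary \ref{cor:f0const} by substituting the Taylor-coefficient expressions (\ref{eqn:fjgjTaylor}) and evaluating the resulting spherical integrals in closed form. First, observe that the Taylor expansion (\ref{eqn:Taylorg1}) forces $\lambda=d$ (as already noted after (\ref{eqn:Taylorg1})), and that the Taylor remainders give $g(\rho,\Omega)=\rho^{\lambda-d}\sum_{j=0}^N g_j(\Omega)\rho^j+O(\rho^{N+\lambda-d+1})$ and $f(\rho,\Omega)=\rho^{\nu}\sum_{j=0}^N f_j(\Omega)\rho^j+O(\rho^{N+\nu+1})$; thus not only the hypotheses of Theorem \ref{thm:main} but also the stronger hypothesis (\ref{eqn:ghyp}) with $\beta=1$ hold. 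By the first remark following Theorem \ref{thm:main}, the asymptotic expansion therefore holds with error $O(k^{-(N+d+1)/\nu})$, and Corollary \ref{cor:f0const} (with $\lambda=d$) supplies
\[
\zeta_j=\tfrac{1}{\nu}\Gamma\!\left(\tfrac{d+j}{\nu}\right)f_0^{-(j+d)/\nu}\sum_{m=0}^{j}\sum_{r=1}^{m}\binom{-\frac{d+j}{\nu}}{r}f_0^{-r}\int_{S^{d-1}}g_{j-m}(\Omega)f_m^{(r)}(\Omega)\,d\Omega.
\]

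Next I would expand the integrand. By (\ref{eqn:fjgjTaylor}), $g_{j-m}(\Omega)=\sum_{|\beta|=j-m}\frac{D^{\beta}g(\mathbf{0})}{\beta!}\Omega^{\beta}$ and $f_{n_i}(\Omega)=\sum_{|\alpha_i|=n_i+\nu}\frac{D^{\alpha_i}f(\mathbf{0})}{\alpha_i!}\Omega^{\alpha_i}$; inserting the definition $f_m^{(r)}=\sum_{n_1+\cdots+n_r=m,\,n_i\geq1}f_{n_1}\cdots f_{n_r}$ and expanding each factor shows that $g_{j-m}(\Omega)f_m^{(r)}(\Omega)$ is a finite sum of monomials $\Omega^{\beta+\alpha_1+\cdots+\alpha_r}$ with coefficient $\frac{D^{\beta}g(\mathbf{0})D^{\alpha_1}f(\mathbf{0})\cdots D^{\alpha_r}f(\mathbf{0})}{\beta!\alpha_1!\cdots\alpha_r!}$. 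Since every sum is finite, I can interchange summation and integration, so the problem reduces to evaluating $\int_{S^{d-1}}\Omega^{\gamma}\,d\Omega$ for multi-indices $\gamma=\beta+\alpha_1+\cdots+\alpha_r$. The key lemma is that this integral equals $w_{\gamma}$ of (\ref{eqn:w}): using the spherical coordinates (\ref{eqn:sphericalcoords}) with solid-angle element (\ref{eqn:solidangle}), $\int_{S^{d-1}}\Omega^{\gamma}\,d\Omega$ factors into a product of one-dimensional power-of-sine/cosine integrals, i.e.\ classical Beta integrals, giving $\frac{2\prod_i\Gamma((\gamma^i+1)/2)}{\Gamma((|\gamma|+d)/2)}$ when every $\gamma^i$ is even and $0$ otherwise (the latter by the antisymmetry already exploited in the remarks after Theorem \ref{thm:main}). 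Converting the half-integer values via $\Gamma(m+\tfrac12)=\frac{(2m-1)!!}{2^m}\sqrt{\pi}$ and the integer values via $(2m)!!=2^m m!$, and treating $d$ even and $d$ odd separately, this collapses to exactly $w_{\gamma}$; the factor $\sqrt{2/\pi}$ in the odd case is precisely the residue of $\Gamma((|\gamma|+d)/2)$ being a half-integer argument there.

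Substituting $\int_{S^{d-1}}\Omega^{\beta+\alpha_1+\cdots+\alpha_r}\,d\Omega=w_{\beta+\alpha_1+\cdots+\alpha_r}$ back into the expression for $\zeta_j$ and retaining the outer factors $\frac1\nu\Gamma(\frac{d+j}{\nu})$, $f_0^{-(j+d)/\nu}$, $\binom{-\frac{d+j}{\nu}}{r}f_0^{-r}$, together with the nested sums $\sum_{|\beta|=j-m}$, $\sum_{n_1+\cdots+n_r=m}$, $\sum_{|\alpha_i|=n_i+\nu}$ produced by the expansion, yields precisely the claimed formula. The empty-sum and empty-product conventions ($0!=(-1)!!=1$, empty sums $=1$) carried over from Theorem \ref{thm:main} handle the $m=0$ (hence $r=0$) boundary term automatically.

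I expect the only real obstacle to be bookkeeping rather than analysis: (i) verifying the double-factorial reduction of the spherical moment in both parities of $d$ and matching it term-for-term to the normalization in (\ref{eqn:w}), especially the $\sqrt{2/\pi}$ for odd $d$; and (ii) keeping the reindexing of the iterated sums consistent with the stated conventions so nothing is dropped or double-counted. No new estimates are needed beyond those in Theorem \ref{thm:main}, and the error term $O(k^{-(N+d+1)/\nu})$ is inherited verbatim from the remark after that theorem with $\beta=1$.
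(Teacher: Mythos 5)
Your proposal is correct and follows essentially the same route as the paper: invoke Corollary \ref{cor:f0const} together with the first remark after Theorem \ref{thm:main} (the Taylor remainders give (\ref{eqn:ghyp}) with $\beta=1$, hence the $O(k^{-(N+d+1)/\nu})$ error), expand $g_{j-m}f_{m}^{(r)}$ into monomials, and evaluate $\int_{S^{d-1}}\Omega^{\gamma}\,d\Omega$ in closed form. The only cosmetic difference is that you quote the Gamma-function formula $2\prod_i\Gamma((\gamma^i+1)/2)/\Gamma((|\gamma|+d)/2)$ for the spherical moment and convert to double factorials, whereas the paper grinds through the iterated $\sin$--$\cos$ reduction formulas and counts factors of $2$ and $\pi$; both yield $w_{\gamma}$.
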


Recall that for an integer $n$, the double factorial is defined\footnote{We \textit{could} define $w_{\alpha}$ by the nonzero formula appearing in (\ref{eqn:w}) if we adopted the nonstandard convention that for $n$ even,
\[
n!!=n(n-2)\cdots4\cdot2\cdot0=0.
\]
We will not do this.} to be $n!!:=n(n-2)(n-4)\cdots(n-2\lfloor n/2\rfloor+2)$ where $\lfloor n/2\rfloor$ is the greatest integer less than $n/2$, that is,
\[
n!!=
\begin{cases}
n(n-2)\cdots4\cdot2,\text{ if $n$ even}\\
n(n-2)\cdots5\cdot3,\text{ if $n$ odd}
\end{cases},
\]
and we adopt the convention that $(-1)!!=1.$ Moreover, for a multi-index $\alpha$, we define $a!!:=\alpha^{1}!!\alpha^{2}!!\cdots\alpha^{d}!!.$ Finally, $\operatorname*{even}(\alpha):=1$ if all components of $\alpha$ are even, and $0$ otherwise.

\bigskip

\begin{proof}
By Corollary \ref{cor:f0const} and Remark 1 following Theorem \ref{thm:main}, we need only compute the integrals $\int_{S^{d-1}}g_{j-m}f_{m}^{(r)}d\Omega.$ The coefficients $f_{j}(\Omega)$ and $g_{j}(\Omega)$ obtained from the Taylor series (\ref{eqn:fjgjTaylor}) and (\ref{eqn:Taylorg1}) are given by (\ref{eqn:fjgjTaylor}). From (\ref{eqn:fmr}), the integrand of interest is therefore given by
\begin{equation}
g_{j-m}(\Omega)f_{m}^{(r)}(\Omega)=\sum_{\left\vert \beta\right\vert =j-m} \sum_{\left\{  \substack{\sum_{i=1}^{r}n_{i}=m\\n_{i}\geq1,~i=1,\dots ,r}\right\}  }\sum_{\left\vert \alpha_{1}\right\vert =n_{1}+\nu}\cdots \sum_{\left\vert \alpha_{r}\right\vert =n_{r}+\nu}\frac{D^{\beta} g(\mathbf{0})D^{\alpha_{1}}f(\mathbf{0})\cdots D^{\alpha_{r}}f(\mathbf{0})}{\beta!\alpha_{1}!\cdots\alpha_{r}!}\Omega^{\beta+\alpha_{1}+\cdots+\alpha_{r}}. \label{eqn:longprod}
\end{equation}
Thus, it remains to compute the integral
\[
\int_{S^{d-1}}\Omega^{\alpha}\,d\Omega
\]
for an arbitrary multi-index $\alpha$.

Let $\alpha=(\alpha^{1},\dots,\alpha^{d})$. By (\ref{eqn:sphericalcoords}) and (\ref{eqn:solidangle}), we have
\begin{align}
\Omega^{\alpha}d\Omega &  =\left(  \cos^{\alpha^{1}}\varphi_{1}\sin^{\alpha^{2}+\alpha^{3}+\cdots+\alpha^{d}+d-2}\varphi_{1}\right) \times\left(  \cos^{\alpha^{2}}\varphi_{2}\sin^{\alpha^{3}+\alpha^{4}+\cdots+\alpha^{d}+d-3}\right)  \times\cdots\label{eqn:OmegaAlpha}\\
&  \qquad\times\left(  \cos^{\alpha^{d-1}}\varphi_{d-1}\sin^{\alpha^{d}}\varphi_{d-1}\right)  d\varphi_{1}\cdots d\varphi_{d-1}.
\end{align}
For nonnegative integers $a$ and $b$, an elementary computation (using reduction formulas, for example \cite[2.510]{Gradshteyn-Ryzhik-5th}) shows that \[
\int_{0}^{\pi}\sin^{a}x\cos^{b}x\,dx=\frac{(a-1)!!(b-1)!!}{(a+b)!!}\,c_{1}(a,b),
\]
where
\[
c_{1}(a,b):=
\begin{cases}
0 & \text{ if $b$ is odd,}\\
\pi & \text{ if both }a,b\text{ are even,}\\
2 & \text{if }a\text{ is odd and }b\text{ is even}
\end{cases},
\]
and
\[
\int_{0}^{2\pi}\sin^{a}x\cos^{b}x\,dx=\frac{(a-1)!!(b-1)!!}{(a+b)!!}\,c_{2}(a,b),
\]
where
\[
c_{2}(a,b):=
\begin{cases}
0 & \text{ if }a\text{ or }b\text{ (or both) is odd,}\\
2\pi & \text{ if both }a,b\text{ are even,}
\end{cases}.
\]
Combining this with (\ref{eqn:OmegaAlpha}) shows that
\[
\int_{S^{d-1}}\Omega^{\alpha}d\Omega=\prod_{l=1}^{d-1}\frac{(\alpha^{l}-1)!!\left(  \sum_{p=l+1}^{d}\alpha^{p}+d-2-l\right)  !!}{\left(\sum_{p=l}^{d}\alpha^{p}+d-1-l\right)  !!}\times\prod_{l=1}^{d-2}c_{1}\left(\sum_{p=l+1}^{d}\alpha^{p}+d-1-l,\alpha^{l}\right)  \times c_{2}(\alpha_{d-1},\alpha_{d}).
\]
Now, if any $\alpha^{p}$ is odd, the entire integral is zero, so we assume for the moment that all $\alpha^{p}$ are even, which implies $\sum\alpha^{p}$ is even. If $d$ is even, then $c_{1}\left(  \sum_{p=l+1}^{d}\alpha^{p}+d-1-l,\alpha^{l}\right)  =\pi$ if $l$ is odd and $2$ if $l$ is even. If $d$ is odd, then the opposite is true, that is, $c_{1}\left(  \sum_{p=l+1}^{d}\alpha^{p}+d-1-l,\alpha^{l}\right)  =2$ if $l$ is odd and $\pi$ if $l$ is even. In any case, $c_{2}(\alpha_{d-1},\alpha_{d})=2\pi$. It is now a simple matter to count the factors of $2$ and $\pi$ appearing in the above product in each case of $d$ even or odd, which yields the theorem.
\end{proof}

%%%%%%%%%%%%%%%%%%%%%%%%%%%%%%%%%%%%%%%%%%%%%%%%%%%%%%%%%%%%%%%%%%
\subsection{Assuming a nondegenerate minimum.\label{sec:f0constv=2}}

In this section we consider the case that $f$ has a nondegenerate minimum at $\mathbf{0}$. It follows (from the proof of Theorem \ref{thm:best}, below) that $\nu=2$ and that $f_{0}(\Omega)>0$; in fact, by an appropriate choice of coordinates, we can guarantee that $f_{0}(\Omega)=1/2$. Hence, Corollary \ref{cor:f0const} applies and we obtain Theorem \ref{thm:best}, which was given in the introduction.

\bigskip

\begin{proof}[Proof of Theorem \ref{thm:best}]
Since $f$ has a nondegenerate minimum at $\mathbf{0}$, the Hessian can be diagonalized, $Hf(\mathbf{0})=Q^{-1}DQ,$ for some orthogonal matrix $Q$ and some diagonal matrix $D.$ The eigenvalues of $Hf(\mathbf{0})$ are all positive, so we can unambiguously define $\sqrt{D}$. Let $P:=\sqrt{D}Q$, and define $\mathbf{y}:=P\mathbf{x}$. Denote the Hessian of $f$ with respect to the new coordinates $\mathbf{y}$ by $H_{y}f$. Then $H_{y}f(\mathbf{0})$ is the identity matrix. Moreover, the Jacobian of the coordinate transformation, since $Q$ is orthogonal, is $\det\sqrt{D}^{-1}=\sqrt{H_{x}f(\mathbf{0})}^{-1}.$ Thus, the integral of interest becomes
\[
\int_{R}e^{-kf(\mathbf{x})}g(\mathbf{x)\,}d^{d}\mathbf{x}=\sqrt{\det H_{x}f(\mathbf{0})}^{-1}\int_{PR}e^{-kf(\mathbf{y)}}g(\mathbf{y)}\,d^{d}\mathbf{y.}
\]

Now we apply our results to the last integral above; in particular, we introduce spherical coordinates $\rho=\left\vert \mathbf{y}\right\vert $ and $\Omega=\mathbf{y/\rho}$. Again since $f$ has a nondegenerate minimum at $\mathbf{0}$, the first term in the radial expansion of $f$ at $\rho=0$ is
\[
f_{0}(\Omega)=\frac{1}{2}\sum_{\left\vert \alpha\right\vert =2}D_{y}^{\alpha}f(\mathbf{0})\Omega^{\alpha}=\frac{1}{2}\sum_{j=1}^{d}\Omega_{j}^{2}=\frac{1}{2},
\]
whence we can apply Theorem \ref{cor:f0const}. On the other hand, the smoothness assumptions on $f$ and $g$ imply that we can express them as Taylor series with remainder as
\[
f(\mathbf{y})=\sum_{\left\vert \alpha\right\vert =2}^{N+2}\frac{1}{\alpha !}D_{y}^{\alpha}f(\mathbf{0})\mathbf{y}^{\alpha}+O(\rho^{N+3})\text{ and }g(\mathbf{y})=\sum_{\left\vert \alpha\right\vert \leq N}\frac{1}{\alpha!}D_{y}^{\alpha}g(\mathbf{0})\mathbf{y}^{\alpha}+O(\rho^{N+1}),
\]
and we can apply Corollary \ref{cor:taylor} to obtain
\begin{align*}
\zeta_{j}  &  =\sqrt{\det H_{x}f(\mathbf{0})}^{-1}\tfrac{1}{2}\Gamma\left( \tfrac{d+j}{2}\right)  2^{(j+d)/2}\sum_{m=0}^{j}\sum_{r=1}^{m}\binom {-\frac{d+j}{2}}{r}2^{r}\\
&  \qquad\times\sum_{\left\vert \beta\right\vert =j-m}\sum_{\left\{\substack{n_{1}+n_{2}+\cdots+n_{r}=m\\n_{i}\geq1,~i=1,\dots,r}\right\}  } \sum_{\left\vert \alpha_{1}\right\vert =n_{1}+2}\cdots\sum_{\left\vert\alpha_{r}\right\vert =n_{r}+2}w_{\beta+\alpha_{1}+\cdots+\alpha_{r}} \frac{D^{\beta}_y g(\mathbf{0})D^{\alpha_{1}}_y f(\mathbf{0})\cdots D^{\alpha_{r}}_y f(\mathbf{0})}{\beta!\alpha_{1}!\cdots\alpha_{r}!}.
\end{align*}
Now, if $j$ is odd, then $\left\vert \beta+\alpha_{1}+\cdots+\alpha_{r}\right\vert =j-m+m+2r=j+2r$ is odd, whence $w_{\beta+\alpha_{1}+\cdots+\alpha_{r}}=0$ since at least one component of $\beta+\alpha_{1}+\cdots+\alpha_{r}$ must be odd. This means $\zeta_{2j+1}=0$ and we can replace $j$ by $2j$.

Inserting the value of $w_{\beta+\alpha_{1}+\cdots+\alpha_{r}}$ and using the fact that for each positive integer $j$ we have
\[
\Gamma\left(\tfrac{d}{2}+j\right)=2^{-(j-1)-d/2}(d+2j-2)!!\left\{
\begin{array}
[c]{c}%
1\text{ if }d\text{ is even}\\
\sqrt{\pi/2}\text{ if }d\text{ is odd}%
\end{array}
\right\},
\]
the coefficients $\zeta_{2j},$ after some simple cancelations, become
\begin{multline*}
\zeta_{2j} =\frac{(2\pi)^{d/2}}{\sqrt{\det H_{x}f(\mathbf{0})}} \sum_{m=0}^{2j}\sum_{r=1}^{m}2^{r}\binom{-(\frac{d}{2}+j)}{r}\frac{(d+2j-2)!!}{(d+2j+2r-2)!!}\\
\times\sum_{\left\vert \beta\right\vert =2j-m}\sum_{\left\{\substack{n_{1}+n_{2}+\cdots+n_{r}=m\\n_{i}\geq1,~i=1,\dots,r}\right\}  } \sum_{\left\vert \alpha_{1}\right\vert =n_{1}+2}\cdots\sum_{\left\vert\alpha_{r}\right\vert =n_{r}+2}\frac{D^{\beta}_y g(\mathbf{0)}D^{\alpha_{1}}_y f(\mathbf{0})\cdots D^{\alpha_{r}}_y f(\mathbf{0})}{\beta!\alpha_{1}!\cdots\alpha_{r}!}\\
\times(\beta+\alpha_{1}+\cdots+\alpha_{r})!!\operatorname*{even}(\beta+\alpha_{1}+\cdots+\alpha_{r}).
\end{multline*}
Finally, one may compute that
\[
2^{r}\binom{-(\frac{d}{2}+j)}{r}\frac{(d+2j-2)!!}{(d+2j+2r-2)!!}=\frac{(-1)^{r}}{r!}
\]
to obtain the theorem.
\end{proof}

\bigskip

The coefficients $\zeta_{j}$ of Theorem \ref{thm:best} can be written in terms of the derivatives of $f$ and $g$ with respect to the original variables $\mathbf{x}$ by inserting the appropriate entries of $P^{-1}.$

Finally, we relate Theorem \ref{thm:best} to Laplace's approximation.

\begin{corollary}
(Laplace's Approximation) Let $R\subset\mathbb{R}^{d}$ be a measurable set which contains $\mathbf{0}$ as an interior point. Let $f$ be a twice differentiable function on $R$ which attains a nondegenerate, unique minimum value of $0$ at $\mathbf{0}$, and let $g$ be a continuous function on $R.$ Assume moreover that there exists a positive real number $k_{0}$ such that
\[
\int_{R}e^{-k_{0}f}g~d^{d}\mathbf{x}
\]
converges. Then
\[
\int_{R}e^{-kf}g~d^{d}\mathbf{x}=\left(  \frac{2\pi}{k}\right)  ^{d/2}\frac{g(\mathbf{0})}{\sqrt{\det Hf(\mathbf{0})}}+O(k^{-d/2+1}),~k\rightarrow\infty.
\]

\end{corollary}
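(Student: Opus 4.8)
The plan is to derive this as the leading-order term of Theorem \ref{thm:best}, being slightly careful about the smoothness hypotheses since here $f$ is only assumed twice differentiable and $g$ only continuous. First I would observe that the corollary is exactly the $j=0$ term in the expansion \eqref{eqn:asymptotic_expansion}: when $N=0$ (or more precisely, when we retain only the first term), Theorem \ref{thm:best} gives $\int_R e^{-kf}g\,d^d\mathbf{x} = k^{-d/2}\zeta_0 + O(k^{-d/2+1})$, and so it remains only to evaluate $\zeta_0$ from the displayed formula for $\zeta_{2j}$.

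Next I would compute $\zeta_0$ directly. Setting $j=0$ in the formula for $\zeta_{2j}$, the outer sum $\sum_{m=0}^{0}$ forces $m=0$, which in turn forces the sum $\sum_{r=1}^{0}$ to be empty; by the stated convention empty sums equal $1$, and the only surviving multi-index data is $\beta$ with $|\beta|=2j-m=0$, i.e.\ $\beta=\mathbf{0}$. Thus the triple-product structure collapses entirely and one is left with
\[
\zeta_0 = \frac{(2\pi)^{d/2}}{\sqrt{\det H_x f(\mathbf{0})}}\cdot \operatorname*{even}(\mathbf{0})\cdot\frac{(\mathbf{0}-\mathbf{1})!!}{\mathbf{0}!}\, D_y^{\mathbf{0}}g(\mathbf{0}) = \frac{(2\pi)^{d/2}\,g(\mathbf{0})}{\sqrt{\det H_x f(\mathbf{0})}},
\]
using $\operatorname*{even}(\mathbf{0})=1$, $(-1)!!=1$ and $\mathbf{0}!=1$, and the fact that a coordinate change does not alter the value $g(\mathbf{0})$. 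Multiplying by $k^{-d/2}$ gives precisely $(2\pi/k)^{d/2}g(\mathbf{0})/\sqrt{\det Hf(\mathbf{0})}$, which is the claimed main term.

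The main obstacle is not the algebra but reconciling the hypotheses: Theorem \ref{thm:best} nominally asks for $f\in C^{N+2}$ and $g\in C^N$, whereas here we only have $f$ twice differentiable and $g$ continuous. I would handle this by taking $N=0$, so that the requirements become $f\in C^2$ and $g\in C^0$, matching the corollary's hypotheses; one should note that the Taylor-with-remainder expansions \eqref{eqn:Taylorf1}, \eqref{eqn:Taylorg1} used in the proof of Theorem \ref{thm:best} require for $N=0$ only a second-order Taylor expansion of $f$ (valid since $f\in C^2$, with Peano remainder $o(\rho^2)$) and a zeroth-order expansion of $g$ (valid since $g$ is continuous, with remainder $o(1)$). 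These are exactly the hypotheses (\ref{eqn:f-series}) and (\ref{eqn:g-series}) of Theorem \ref{thm:main} with $\nu=2$, $\lambda=d$, and the remark following Theorem \ref{thm:main} guarantees the $O(k^{-(N+\lambda)/\nu+1}) = O(k^{-d/2+1})$ error term once one checks that the slightly stronger hypothesis \eqref{eqn:ghyp} can be taken with some $\beta>0$ — or, alternatively, one simply invokes the $o(k^{-d/2})$ form and notes that the next correction $\zeta_2 k^{-d/2-1}$ is genuinely $O(k^{-d/2+1})$. Either way, assembling these pieces yields the corollary.
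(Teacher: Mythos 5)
Your proposal is correct and follows essentially the same route as the paper, which likewise reads off the corollary from the $\zeta_0$ formula of Theorem \ref{thm:best} (with $\lambda=d$ and $g_0(\Omega)=g(\mathbf{0})$ coming from continuity of $g$). Your extra care in checking the conventions for empty sums and in reconciling the $N=0$ smoothness hypotheses is a welcome elaboration but not a different argument.
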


\begin{proof}
Since $g$ is continuous, we have $\lambda=d$ and $g_{0}(\Omega)=g(\mathbf{0})$ is constant. The Corollary then follows from the formula for $\zeta_{0}$ of Theorem \ref{thm:best}.
\end{proof}

\bigskip

%%%%%%%%%%%%%%%%%%%%%%%%%%%%%%%%%%%%%%%%%%%%%%%%%%%%%%%%%%%%%%%%%%
\subsection{The $1$-dimensional case.\label{sec:1D}}

We compute the coefficients of Theorem \ref{thm:main} in the $1$-dimensional case, where several simplifications occur, thus rederiving the recent results of Wojdylo \cite{WojdyloSIAM}, \cite[Thm 1.1]{WojdyloJCAM}.

\begin{corollary}
\label{cor:1D}Suppose that for $x\rightarrow0$, we have $f(x)=x^{\nu}\sum_{j=0}^{N}a_{j}x^{j}+o(x^{N+\nu})$ for some $\nu>0$ and $g(x)=\sum_{j=0}^{N}b_{j}x^{j}+o(x^{N})$. Then if there exists $k_{0}>0$ such that $\int_{-a}^{b}e^{-k_{0}f}g~dx$ converges, there exists an asymptotic expansion
\[
\int_{-a}^{b}e^{-kf}g\,dx=k^{-1/\nu}\sum_{j=0}^{\lfloor N/2\rfloor}\zeta_{2j}k^{-2j/\nu}+o(k^{-(2\lfloor N/2\rfloor+1)/\nu}),~k\rightarrow\infty
\]
where the coefficients are given by
\[
\zeta_{2j}=\tfrac{2}{\nu}\,\Gamma\left(  \tfrac{2j+1}{\nu}\right)\,a_{0}^{-(2j+1)/\nu}\left(  \sum_{m=0}^{2j}b_{2j-m}\sum_{r=1}^{m}\binom{-\frac{2j+1}{\nu}}{r}a_{0}^{-r}a_{m}^{(r)}\right)
\]
in which $a_{m}^{(r)}$ is the sum of all ordered products of $r$ terms of the set $\{a_{1},a_{2},\dots\}$ such that the subscripts add to $m$, $\lfloor N/2\rfloor$ denotes the largest integer less than $N/2$ and empty sums are understood to be $1.$
\end{corollary}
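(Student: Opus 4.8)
The plan is to obtain Corollary \ref{cor:1D} as a direct specialization of Theorem \ref{thm:main} to the case $d=1$. First I would set up the spherical-coordinate formalism in one dimension: here $S^{0}=\{-1,+1\}$ consists of two points, the ``radial'' variable is $\rho=|x|$, and the angular variable $\Omega\in\{-1,+1\}$ records the sign of $x$. The measure $\int_{S^{0}}h(\Omega)\,d\Omega$ is simply $h(+1)+h(-1)$. Writing $f(x)=x^{\nu}\sum_{j=0}^{N}a_{j}x^{j}+o(x^{N+\nu})$ and $g(x)=\sum_{j=0}^{N}b_{j}x^{j}+o(x^{N})$ in these coordinates, one reads off, for $\Omega=+1$, that $f_{j}(+1)=a_{j}$ and $g_{j}(+1)=b_{j}$, while for $\Omega=-1$ one gets $f_{j}(-1)=(-1)^{j+\nu}a_{j}$ (from $x^{\nu}\cdot x^{j}=\rho^{\nu+j}\Omega^{\nu+j}$) and $g_{j}(-1)=(-1)^{j}b_{j}$. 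In particular $f_{0}(\pm1)=(\pm1)^{\nu}a_{0}$; since $f$ must be nonnegative with a minimum at $0$, $\nu$ is forced to be such that $f_{0}>0$ on both points, and with the convention $a_0>0$ one has $f_{0}(\Omega)=a_{0}$ is constant, so Corollary \ref{cor:f0const} applies. Also $g$ continuous gives $\lambda=d=1$, so the exponents in (\ref{eqn:asympexp}) are $k^{-(j+1)/\nu}$.

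Next I would compute the angular quantity $f_{m}^{(r)}(\Omega)$. Since it is a sum of ordered products $f_{n_{1}}(\Omega)\cdots f_{n_{r}}(\Omega)$ with $\sum n_{i}=m$, at $\Omega=+1$ it equals $a_{m}^{(r)}$, and at $\Omega=-1$ it picks up a factor $(-1)^{\sum(n_{i}+\nu)}=(-1)^{m+r\nu}$, so $f_{m}^{(r)}(-1)=(-1)^{m+r\nu}a_{m}^{(r)}$. Plugging into the integrand $\eta_{j}(\Omega)=f_{0}^{-(j+\lambda)/\nu}\sum_{m=0}^{j}g_{j-m}(\Omega)\sum_{r=1}^{m}\binom{-(j+\lambda)/\nu}{r}f_{0}^{-r}f_{m}^{(r)}(\Omega)$, the contribution at $\Omega=-1$ of the $(m,r)$ term carries the sign $(-1)^{(j-m)}\cdot(-1)^{m+r\nu}=(-1)^{j+r\nu}$ relative to the $\Omega=+1$ term. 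When we replace $j$ by $2j$ and sum over both points, the $\Omega=\pm1$ contributions add rather than cancel precisely when $(-1)^{r\nu}=1$; one checks (using that the odd-index coefficients $\zeta_{2j+1}$ vanish by the general parity argument in Remark 2 following Theorem \ref{thm:main}, or directly) that the surviving terms double up, producing the factor $2$ in front and the coefficient $\tfrac{2}{\nu}\Gamma\!\left(\tfrac{2j+1}{\nu}\right)$. Assembling everything, $\int_{S^{0}}\eta_{2j}\,d\Omega$ becomes $2\,a_{0}^{-(2j+1)/\nu}\sum_{m=0}^{2j}b_{2j-m}\sum_{r=1}^{m}\binom{-(2j+1)/\nu}{r}a_{0}^{-r}a_{m}^{(r)}$ and multiplying by $\tfrac{1}{\nu}\Gamma\!\left(\tfrac{2j+1}{\nu}\right)$ yields exactly the claimed $\zeta_{2j}$.

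The expansion statement itself — that the integral equals $k^{-1/\nu}\sum_{j=0}^{\lfloor N/2\rfloor}\zeta_{2j}k^{-2j/\nu}+o(k^{-(2\lfloor N/2\rfloor+1)/\nu})$ — then follows from (\ref{eqn:asympexp}) with $d=\lambda=1$: the general expansion reads $\sum_{j=0}^{N}\zeta_{j}k^{-(j+1)/\nu}+o(k^{-(N+1)/\nu})$, and the vanishing of $\zeta_{2j+1}$ lets us retain only even indices $j=2j'$ up to $j'=\lfloor N/2\rfloor$, with error $o(k^{-(2\lfloor N/2\rfloor+1)/\nu})$ (any dropped odd terms of higher index being absorbed into the error). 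The only point requiring a little care is the bookkeeping of signs from $\Omega=-1$ and verifying that they produce exactly a factor $2$ (and not cancellation) on the even-index coefficients; this is the main obstacle, but it is routine once one observes that all the $\nu$-dependence in the sign enters through $(-1)^{r\nu}$ and that the surviving coefficients are the even-index ones, for which the total sign is $(-1)^{2j+r\nu}=(-1)^{r\nu}$, matched by the analogous sign $(-1)^{r\nu}$ coming from the $a_m^{(r)}$ in the $\Omega=-1$ summand, so the two points contribute equally.
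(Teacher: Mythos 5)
Your proposal is correct and follows essentially the same route as the paper: specialize Theorem \ref{thm:main} to $d=1$ with $S^{0}=\{\pm1\}$, read off $f_{j}(\Omega)=a_{j}\Omega^{j}$, $g_{j}(\Omega)=b_{j}\Omega^{j}$ and hence $f_{m}^{(r)}(\Omega)=a_{m}^{(r)}\Omega^{m}$, and use the parity of the resulting power of $\Omega$ to get the factor $2$ for even $j$ and the vanishing of $\zeta_{2j+1}$. Your extra care with the $(-1)^{\nu}$ (equivalently $\Omega^{\nu}$) factor, which forces $f_{0}(\pm1)=a_{0}$ and justifies invoking Corollary \ref{cor:f0const}, only makes explicit something the paper's proof passes over silently, so it does not constitute a different argument.
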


\begin{proof}
Note that the hypothesis on $g$ implies $\lambda=d$ in Theorem \ref{thm:main}.

Introduce ``spherical coordinates'' $\rho(x)=\left\vert x\right\vert $ and $\Omega(x)=x/\left\vert x\right\vert=\pm1$ on $\mathbb{R}$. Note that $S^{0}=\{\pm1\}$. Since $\Omega^{j}=1$ if $j$ is even and $=\Omega$ if $j$ is odd,
\begin{equation}
\int_{S^{0}}\Omega^{j}d\Omega=2\varepsilon_{j}
\end{equation}
where $\varepsilon_{j}=1$ if $j$ is even and $0$ otherwise.

The series for $f$ in polar coordinates is $f(\rho,\Omega)=\rho^{\nu}\sum_{j=0}^{N}a_{j}\Omega^{j}\rho^{j},$ so that $f_{j}(\Omega)=a_{j}\Omega^{j},$ which is equal to $a_{j}$ if $j$ is even and $a_{j}\Omega$ if $j$ is odd. Similarly, $g_{j}(\Omega)=b_{j}$ if $j$ is even and $b_{j}\Omega$ if $j$ is odd. Using these facts, and also the observation that the power of $\Omega$ in $f_{m}^{(r)}$ is $m$, we obtain from Theorem \ref{thm:main} that
\[
\zeta_{j}=\tfrac{1}{\nu}\,\Gamma\left(  \tfrac{j}{\nu}\right)  a_{0}^{-(j+1)/\nu}2\varepsilon_{j}\left(  \sum_{m=0}^{j}b_{j-m}\sum_{r=1}^{m}\binom{-\frac{j+1}{\nu}}{r}a_{0}^{-r}a_{m}^{(r)}\right)
\]
from which it is clear that $\zeta_{j}=0$ for $j$ odd, whence we obtain the desired result.
\end{proof}

\bigskip

It is worth mentioning that in \cite[Sec. 6.1]{WojdyloSIAM}, to check the correctness of his formulas (essentially the same as Corollary \ref{cor:1D}), Wojdylo applies his expansion to the $\Gamma$-function by writing
\begin{equation}
k!=\Gamma(k+1)=\int_{0}^{\infty}e^{-t}t^{k}dt=k^{k+1}e^{-k}\int_{-1}^{\infty}e^{-k(x-\ln(x+1))}dx.\label{eqn:kfact}
\end{equation}
The resulting expansion (i.e., applying Corollary \ref{cor:1D} to (\ref{eqn:kfact})) can be used to obtain the following beautiful expression for the Stirling series (\textit{cf.} \cite[6.1.37]{Abramowitz-Stegun},
\cite{Stirling})

\[
k!\sim k^{k}e^{-k}\sqrt{2\pi k}\left(  1+\frac{1}{12k}+\frac{1}{288k^{2}}-\frac{139}{51840k^{3}}-\frac{571}{2488320k^{4}}+\frac{163879}{209018880k^{5}}+\cdots\right),~k\rightarrow\infty.
\]

\begin{corollary}
The Stirling series is given by
\[
k!\sim k^{k}e^{-k}\sqrt{2\pi k}\sum_{j=0}^{\infty}k^{-j}\sum_{r=0}^{2j}\frac{(-1)^{r}}{r!}(2j+2r-1)!!\,a_{2j}^{(r)}%
\]
where $a_{m}^{(r)}$ is the sum of all ordered products of $r$ terms of the set $\{a_{1},a_{2},\dots\}$ in which the subscripts add to $m$, where $a_{j}=\frac{1}{j+2},$ and where $(-1)!!$ is understood to be $0$.
\end{corollary}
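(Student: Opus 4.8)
The plan is to apply Corollary~\ref{cor:1D} to the integral representation~(\ref{eqn:kfact}) with phase $f(x)=x-\ln(x+1)$ and amplitude $g(x)\equiv 1$, and then to collapse the resulting coefficients into the advertised form using elementary identities for the Gamma function and the double factorial.

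First I would check the hypotheses of Corollary~\ref{cor:1D}. From $\ln(1+x)=\sum_{n\ge1}(-1)^{n+1}x^{n}/n$ one gets
\[
f(x)=x-\ln(1+x)=\sum_{n\ge2}\frac{(-1)^{n}}{n}x^{n}=x^{2}\sum_{j\ge0}\frac{(-1)^{j}}{j+2}x^{j},
\]
so in the notation of the corollary $\nu=2$ and, provisionally, $a_{j}=(-1)^{j}/(j+2)$; in particular $a_{0}=1/2>0$. Since $f'(x)=x/(1+x)$ vanishes only at $x=0$, where $f$ has value $0$ and a nondegenerate minimum, and $f(x)\to+\infty$ at both ends of $(-1,\infty)$, the function $f$ attains its unique minimum at $0$; the tail $\int_{1}^{\infty}e^{-kf}\,dx$ is $O(e^{-k(1-\ln2)})$ and hence absorbed into the error term, while convergence of $\int_{-1}^{\infty}e^{-k_{0}f}\,dx$ for every $k_{0}>0$ is immediate from~(\ref{eqn:kfact}) itself. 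Because $g\equiv1$ we have $\lambda=d=1$, $b_{0}=1$, $b_{j}=0$ for $j\ge1$, so in the formula for $\zeta_{2j}$ only the $m=2j$ term survives and
\[
\zeta_{2j}=\Gamma\!\left(\tfrac{2j+1}{2}\right)a_{0}^{-(2j+1)/2}\sum_{r=1}^{2j}\binom{-\tfrac{2j+1}{2}}{r}a_{0}^{-r}a_{2j}^{(r)}.
\]
A convenient observation here is that every ordered product occurring in $a_{2j}^{(r)}$ has subscripts summing to the \emph{even} integer $2j$, so $a_{2j}^{(r)}$ is unchanged when $a_{j}=(-1)^{j}/(j+2)$ is replaced by $a_{j}=1/(j+2)$; this is precisely what licenses the cleaner coefficients appearing in the statement.

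The computational core is then the simplification of the prefactors. Using $\Gamma\!\left(\tfrac{2j+1}{2}\right)=\Gamma\!\left(j+\tfrac12\right)=2^{-j}(2j-1)!!\,\sqrt{\pi}$ together with $a_{0}^{-(2j+1)/2}=2^{j}\sqrt{2}$, $a_{0}^{-r}=2^{r}$, and the identity
\[
2^{r}\binom{-\tfrac{2j+1}{2}}{r}=\frac{(-1)^{r}}{r!}\,\frac{(2j+2r-1)!!}{(2j-1)!!},
\]
all powers of $2$ and all factors $(2j-1)!!$ cancel, leaving $\zeta_{2j}=\sqrt{2\pi}\sum_{r=1}^{2j}\frac{(-1)^{r}}{r!}(2j+2r-1)!!\,a_{2j}^{(r)}$ for $j\ge1$, while $\zeta_{0}=\Gamma(\tfrac12)\sqrt{2}=\sqrt{2\pi}$ follows at once from the empty-sum convention. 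Substituting the expansion of $\int_{-1}^{\infty}e^{-kf}\,dx$ furnished by Corollary~\ref{cor:1D} (for each $N$) into~(\ref{eqn:kfact}) gives $k!\sim k^{k}e^{-k}\sqrt{k}\sum_{j\ge0}\zeta_{2j}k^{-j}$, and pulling out the common $\sqrt{2\pi}$ produces the stated series; the inner sum may be taken from $r=0$ because $a_{2j}^{(0)}=0$ for $j\ge1$, and the $j=0$ term reproduces $\zeta_{0}=\sqrt{2\pi}$ under the stated double-factorial conventions.

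The only delicate point — and it is bookkeeping rather than a genuine obstacle — is matching the double-factorial identity for $\binom{-(2j+1)/2}{r}$ with the duplication-type evaluation of $\Gamma(j+\tfrac12)$ so that the prefactors telescope down exactly to $\tfrac{(-1)^{r}}{r!}(2j+2r-1)!!$. There is no analytic difficulty: the existence of the asymptotic expansion and every error estimate are supplied verbatim by Corollary~\ref{cor:1D} once the elementary facts about $f$ recorded above are in place.
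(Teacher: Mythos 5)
Your proposal is correct and is exactly the route the paper intends: apply Corollary~\ref{cor:1D} to the representation~(\ref{eqn:kfact}) with $\nu=2$, $a_j=(-1)^j/(j+2)$, $g\equiv1$ (so only $m=2j$ survives), and collapse the prefactors via $\Gamma\bigl(j+\tfrac12\bigr)=2^{-j}(2j-1)!!\sqrt{\pi}$ and $2^{r}\binom{-(2j+1)/2}{r}=\tfrac{(-1)^{r}}{r!}\,\tfrac{(2j+2r-1)!!}{(2j-1)!!}$, the one genuinely necessary observation being your remark that $a_{2j}^{(r)}$ is unchanged when the factor $(-1)^{j}$ is dropped because the subscripts sum to the even integer $2j$. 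The only caveat is at $j=0$: recovering $\zeta_{0}=\sqrt{2\pi}$ from the $r=0$ term requires $(-1)!!=1$ (the convention used everywhere else in the paper), not the $(-1)!!=0$ printed in this corollary's statement, so your appeal to \emph{the stated double-factorial conventions} there should instead flag that printed convention as an apparent typo.
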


%%%%%%%%%%%%%%%%%%%%%%%%%%%%%%%%%%%%%%%%%%%%%%%%%%%%%%%%%%%%%%%%%%
\section{Appendix}
%%%%%%%%%%%%%%%%%%%%%%%%%%%%%%%%%%%%%%%%%%%%%%%%%%%%%%%%%%%%%%%%%%

Listed below are the first few coefficients appearing in the asymptotic expansion, Theorem \ref{thm:main}:
\begin{align*}
\zeta_{0}  &  =\tfrac{1}{\nu}\Gamma\left(  \tfrac{\lambda}{\nu}\right) \int_{S^{d-1}}\frac{g_{0}(\Omega)}{f_{0}(\Omega)^{\lambda/\nu}}\,d\Omega,\\
\zeta_{1}  &  =\tfrac{1}{\nu}\Gamma\left(  \tfrac{\lambda+1}{\nu}\right) \int_{S^{d-1}}\frac{g_{1}(\Omega)}{f_{0}(\Omega)^{\frac{\lambda+1}{\nu}}} -\tfrac{(\lambda+1)}{\nu}\frac{f_{1}(\Omega)g_{0}(\Omega)}{f_{0}\Omega)^{\frac{\lambda+1}{\nu}+1}}\,d\Omega,\\
\zeta_{2}  &  =\tfrac{1}{\nu}\Gamma\left(  \tfrac{\lambda+2}{\nu}\right)\int_{S^{d-1}}\frac{g_{2}(\Omega)}{f_{0}(\Omega)^{\frac{\lambda+2}{\nu}}} -\tfrac{\lambda+2}{\nu}\frac{g_{0}(\Omega)f_{2}(\Omega)+g_{1}(\Omega)f_{1}(\Omega)}{f_{0}(\Omega)^{1+\frac{\lambda+2}{\nu}}} +\binom{-\frac{\lambda+2}{\nu}}{2}\frac{g_{0}(\Omega)f_{1}(\Omega)^{2}}{f_{0}(\Omega)^{2+\frac{\lambda+2}{\nu}}}\,d\Omega,\\
\zeta_{3}  &  =\tfrac{1}{\nu}\Gamma\left(  \tfrac{\lambda+3}{\nu}\right)\int_{S^{d-1}}\frac{g_{3}(\Omega)}{f_{0}(\Omega)^{\frac{\lambda+3}{\nu}}} -\tfrac{\lambda+3}{\nu}\frac{g_{2}(\Omega)f_{1}(\Omega)+g_{1}(\Omega)f_{2}(\Omega)+g_{0}(\Omega)f_{3}(\Omega)}{f_{0}(\Omega)^{\frac{\lambda+3}{\nu}+1}}\\
&  \qquad\qquad+\binom{-\frac{\lambda+3}{\nu}}{2}\frac{g_{1}(\Omega)f_{1}(\Omega)^{2} +2g_{0}(\Omega)f_{1}(\Omega)f_{2}(\Omega)}{f_{0}(\Omega)^{\frac{\lambda+3}{\nu}+2}}+\binom{-\frac{\lambda+3}{\nu}}{3} \frac{g_{0}(\Omega)f_{1}(\Omega)^{3}}{f_{0}(\Omega)^{\frac{\lambda+3}{\nu}+3}}\,d\Omega,\\
\zeta_{4}  &  =\tfrac{1}{\nu}\Gamma\left(  \tfrac{\lambda+4}{\nu}\right) \int_{S^{d-1}}\frac{g_{4}(\Omega)}{f_{0}(\Omega)^{\frac{\lambda+4}{\nu}}} -\tfrac{\lambda+4}{\nu}\frac{g_{0}(\Omega)f_{4}(\Omega)+g_{1}(\Omega)f_{3}(\Omega)+g_{2}(\Omega)f_{2}(\Omega) +g_{3}(\Omega)f_{1}(\Omega)}{f_{0}(\Omega)^{\frac{\lambda+4}{\nu}+1}}\\
&  \qquad\qquad+\binom{-\frac{\lambda+4}{\nu}}{2}\frac{f_{2}(\Omega)^{2}+2g_{0}(\Omega)f_{1}(\Omega)f_{3}(\Omega)+2g_{1}(\Omega)f_{1}(\Omega) f_{2}(\Omega)+g_{2}(\Omega)f_{1}(\Omega)^{2}}{f_{0}(\Omega)^{\frac{\lambda+4}{\nu}+2}}\\
&  \qquad\qquad+\binom{-\frac{\lambda+4}{\nu}}{3}\frac{3g_{0}(\Omega)f_{1}(\Omega)^{2}f_{2}(\Omega)+g_{1}(\Omega) f_{1}(\Omega)^{3}}{f_{0}(\Omega)^{\frac{\lambda+4}{\nu}+3}}+\binom{-\frac{\lambda+4}{\nu}}{4} \frac{g_{0}(\Omega)f_{1}(\Omega)^{4}}{f_{0}(\Omega)^{\frac{\lambda+4}{\nu}+4}}\,\,d\Omega.
\end{align*}

Next, we give the first two nonzero coefficients appearing in Theorem \ref{thm:best}. Here, the coefficients are expressed in terms of the derivatives of $f$ and $g$ at $\mathbf{0}$ with respect to $\mathbf{y}=P\mathbf{x}$, where $P$ is the matrix such that $H_{y}f(\mathbf{0})=Id.$ In particular, if $H_xf(\mathbf{0})=Q^{-1}DQ$ for a diagonal matrix $D$ and orthogonal matrix $Q$, then $P=\sqrt{D}Q.$ Here, $\zeta_{j}=0$ for odd values of $j$ (see Remark 2 following Theorem \ref{thm:main} or the proof of Theorem \ref{thm:best}), and we give only the nonzero values.
\begin{align*}
\zeta_{0}  &  =\frac{(2\pi)^{d/2}g(\mathbf{0})}{\sqrt{\det H_{x}f(\mathbf{0})}} \qquad \qquad \text{(Laplace's Approximation)}\\
\zeta_{2}  &  =\frac{(2\pi)^{d/2}}{\sqrt{\det}H_{x}f(\mathbf{0})}\Bigg(\sum_{\left\vert \beta\right\vert =2}\frac{D_{y}^{\beta}g}{\beta!} -\sum_{\left\vert \beta\right\vert =1}D_{y}^{\beta}g\sum_{\left\vert\alpha\right\vert =3}\frac{D_{y}^{\alpha}f}{\alpha!}(\beta+\alpha -\mathbf{1})!!\operatorname*{even}(\beta+\alpha)\\
&  \qquad+g\Big(  -\sum_{\left\vert \alpha\right\vert =4}\frac{D_{y}^{\alpha}f}{\alpha!}(\alpha-\mathbf{1})!! \operatorname*{even}(\alpha)+\tfrac{1}{2}\sum_{\left\vert \alpha_{1}\right\vert =\left\vert \alpha_{2}\right\vert=3}\frac{D_{y}^{\alpha_{1}}f~D_{y}^{\alpha_{2}}f}{a_{1}!\alpha_{2}!} (\alpha_{1}+\alpha_{2}-\mathbf{1})!!\operatorname*{even}(\alpha_{1}+\alpha_{2})\Big)  \Bigg)_{\mathbf{y}=\mathbf{0}}
\end{align*}
Finally, we give the above coefficients precisely for small $d$. (Recall that for $d=1$, our results coincide with Corollary \ref{cor:1D}, originally due to Wojdylo \cite{WojdyloSIAM}, \cite{WojdyloJCAM}.)

\begin{landscape}
\medskip\textbf{Dimension }$d=1$
\begin{align*}
\zeta_{0}  &  =\frac{\sqrt{2\pi}g(0)}{\sqrt{f^{\prime\prime}(0)}}\\
\zeta_{2}  &  =\frac{\sqrt{2\pi}}{(f^{\prime\prime}(0))^{7/2}} \left(\frac{1}{2}g^{\prime\prime}(0)(f^{\prime\prime}(0))^2-\frac{1}{2}g^{\prime}(0)f^{(3)}(0)f^{\prime\prime}(0) -\frac{1}{8}g(0)f^{(4)}(0)f^{\prime\prime}(0)+\frac{5}{24}g(0)(f^{(3)}(0))^{2}\right) \\
\zeta_{4}  &  =\frac{\sqrt{2\pi}}{(f^{\prime\prime}(0))^{13/2}} \Bigg(\frac{1}{24}g^{(4)}(0)(f''(0))^4-\frac{5}{12}g^{(3)}(0)f^{(3)}(0)(f''(0))^3 \\
& \qquad +\frac{1}{2} g''(0) \left(-\frac{5}{8}f^{(4)}(0)(f''(0))^3+\frac{35}{4}(f^{(3)}(0))^{2}(f''(0))^4\right) +g'(0) \left(-\frac{1}{8}f^{(5)}(0)(f''(0))^3 +\frac{35}{48}f^{(4)}(0)f^{(3)}(0)(f''(0))^2 -\frac{35}{48}(f^{(3)}(0))^{3}f''(0)\right) \\
&  \qquad+g(0)\Big(-\frac{1}{48}f^{(6)}(0)(f''(0))^3 +\frac{35}{384}(f^{(4)}(0))^{2}(f''(0))^2 +\frac{7}{48}f^{(5)}(0)f^{(3)}(0)(f''(0))^2 -\frac{35}{64}f^{(4)}(0)(f^{(3)}(0))^{2}f''(0)+\frac{385}{1152}(f^{(3)}(0))^{4}\Big)  \Bigg)
\end{align*}

\medskip
\textbf{Dimension }$d=2$\textbf{:}
\begin{align*}
\zeta_{0}  &  =\frac{2\pi g(\mathbf{0})}{\sqrt{\text{$\det H_{x}f(\mathbf{0})$}}}\\
\zeta_{2}  &  =\frac{2\pi g}{\sqrt{\text{$\det H_{x}f(\mathbf{0})$}}} \Bigg[\left(  \frac{1}{2}\left(  \partial_{y^{1}}^{2}g+\partial_{y^{2}}^{2}g\right)  \right)  -\frac{1}{2}\partial_{y^{1}}g\left(  \partial_{y^{1}}\partial_{y^{2}}^{2}f+\partial_{y^{1}}^{3}f\right)  -\frac{1}{2}\partial_{y^{2}}g\left(  \partial_{y^{1}}^{2}\partial_{y^{2}}f+\partial_{y^{2}}^{3}f\right) \\
&  +g\left(  -\frac{1}{8}\left(  \partial_{y^{1}}^{4}f+2\partial_{y^{1}}^{2}\partial_{y^{2}}^{2}f+\partial_{y^{2}}^{4}f\right)  +\frac{1}{4}\left(\partial_{y^{2}}^{3}f~\partial_{y^{1}}^{2}\partial_{y^{2}}f+\partial_{y^{1}}^{3}f~\partial_{y^{1}}\partial_{y^{2}}^{2}f\right)  +\frac{3}{8}\left( \left(  \partial_{y^{1}}\partial_{y^{2}}^{2}f\right)  ^{2}+\left( \partial_{y^{1}}^{2}\partial_{y^{2}}f\right)  ^{2}\right)  +\frac{5}{24}\left(  \left(  \partial_{y^{1}}^{3}f\right)  ^{2}+\left(  \partial_{y^{2}}^{3}f\right)  ^{2}\right)  \right)  \Bigg]_{\mathbf{y}=\mathbf{0}}
\end{align*}

\bigskip
\bigskip
\textbf{Dimension }$d=3$\textbf{:}
\begin{align*}
\zeta_{0} &  =\frac{(2\pi)^{3/2}g(\mathbf{0})}{\sqrt{\det H_{x}f(\mathbf{0)}}}\\
\zeta_{2} &  =\frac{(2\pi)^{3/2}}{\sqrt{\det H_{x}f(\mathbf{0)}}} \Bigg[\frac{1}{2}\left(  \partial_{y^{1}}^{2}g+\partial_{y^{2}}^{2} g+\partial_{y^{3}}^{2}g\right)  -\frac{1}{2}\partial_{y^{1}}g\left(\partial_{y^{1}}^{3}f+\partial_{y^{1}}\partial_{y^{2}}^{2}f+\partial_{y^{1} }\partial_{y^{3}}^{2}f\right)  -\frac{1}{2}\partial_{y^{2}}g\left(\partial_{y^{2}}^{3}f+\partial_{y^{2}}\partial_{y^{3}}^{2}f+\partial_{y^{1}}^{2}\partial_{y^{2}}f\right)  \\
&  -\frac{1}{2}\partial_{y^{3}}g\left(  \partial_{y^{3}}^{3}f+\partial_{y^{2}}^{2}\partial_{y^{3}}f+\partial_{y^{1}}^{2}\partial_{y^{3}}f\right)  \\
&  + g\Big(-\frac{1}{8}\left(  \partial_{y^{1}}^{4}f+\partial_{y^{2}}^{4}f+\partial_{y^{3}}^{4}f+2\partial_{y^{1}}^{2}\partial_{y^{2}}^{2}f +2\partial_{y^{1}}^{2}\partial_{y^{3}}^{2}f+2\partial_{y^{2}}^{2} \partial_{y^{3}}^{2}f\right)  +\frac{5}{24}\left(  \left(  \partial_{y^{1}}^{3}f\right)  ^{2}+\left(  \partial_{y^{2}}^{3}f\right)  ^{2}+\left(\partial_{y^{3}}^{3}f\right)  ^{2}\right)  \\
&  +\frac{3}{8}\left(  \left(  \partial_{y^{1}}^{2}\partial_{y^{2}}f\right)^{2}+\left(  \partial_{y^{1}}^{2}\partial_{y^{3}}f\right)  ^{2}+\left( \partial_{y^{2}}^{2}\partial_{y^{3}}f\right)  ^{2}+\left(  \partial_{y^{1}}\partial_{y^{2}}^{2}f\right)  ^{2}+\left(  \partial_{y^{1}}\partial_{y^{3}}^{2}f\right)  ^{2}+\left(  \partial_{y^{2}}\partial_{y^{3}}^{2}f\right)^{2}\right)  \\
&  +\frac{1}{2}\left(  \partial_{y^{1}}^{3}f\left(  \partial_{y^{1}} \partial_{y^{2}}^{2}f+\partial_{y^{1}}\partial_{y^{3}}^{2}f\right) +\partial_{y^{2}}^{2}f\left(  \partial_{y^{1}}^{2}\partial_{y^{2}} f+\partial_{y^{2}}\partial_{y^{3}}^{2}f\right)  +\partial_{y^{3}}^{2}f\left( \partial_{y^{1}}^{2}\partial_{y^{3}}f+\partial_{y^{2}}^{2}\partial_{y^{3}}f\right)  \right)  \\
&  +\frac{1}{2}\left(  \partial_{y^{1}}\partial_{y^{3}}^{2}f~\partial_{y^{1}}\partial_{y^{2}}^{2}+\partial_{y^{2}}^{2}\partial_{y^{3}}f ~\partial_{y^{1}}^{2}\partial_{y^{3}}f+\partial_{y^{2}}\partial_{y^{3}}^{2}f~\partial_{y^{1}}^{2}\partial_{y^{2}}f\right)  +\partial_{y^{1}}\partial_{y^{2}}\partial_{y^{3}}f\Big)\Bigg]_{\mathbf{y}=\mathbf{0}}
\end{align*}
\end{landscape}

\subsection*{Acknowledgements}

The author would like to thank the referees for calling to his attention the book \cite{Wong} and the papers \cite{Denef-Sargos}, \cite{Dostal-Gaveau}, \cite{Liakhovetski-Paris}, and \cite{Skinner}, and for several helpful comments and suggestions which greatly improved the exposition and motivated the extensions of Theorem \ref{thm:main} to Corollary \ref{cor:f0const} and Theorem \ref{thm:best}. The author would also like to thank R\'{e}mi Leclercq for several useful discussions.

\bigskip

{\small
\providecommand{\bysame}{\leavevmode\hbox to3em{\hrulefill}\thinspace}
\providecommand{\MR}{\relax\ifhmode\unskip\space\fi MR }
% \MRhref is called by the amsart/book/proc definition of \MR.
\providecommand{\MRhref}[2]{%
  \href{http://www.ams.org/mathscinet-getitem?mr=#1}{#2}
}
\providecommand{\href}[2]{#2}

}

\end{document}